\theoremstyle{plain}
\newtheorem{pro}{\hspace{6mm}Proposition}[section]
\newtheorem{lem}{\hspace{6mm}Lemma}[section]
\theoremstyle{definition}
\theoremstyle{remark}
\newcommand{\V}[1]{\mathbf{#1}}
\newcommand{\email}[1]{\href{mailto:#1}{#1}}
\title{Efficient parameter-robust preconditioners for linear poroelasticity and elasticity in the primal formulation%
}
\author{
Weizhang Huang\thanks{Department of Mathematics, the University of Kansas, 1460 Jayhawk Blvd, Lawrence, KS 66045, USA (\email{whuang@ku.edu}).}
\and
Zhuoran Wang\thanks{Department of Mathematics, the University of Kansas, 1460 Jayhawk Blvd, Lawrence, KS 66045, USA (\email{wangzr@ku.edu}).}
}
\date{} 
\begin{document}

\maketitle

\begin{abstract}
Poroelasticity problems play an important role in various engineering, geophysical, and biological applications.
Their full discretization results in a large-scale saddle-point system at each time step that is becoming
singular for locking cases and needs effective preconditioners for its fast iterative solution.
Instead of constructing spectrally equivalent ones,
we develop nonsingular preconditioners so that the eigenvalues of the preconditioned system consist of a cluster around
$1$ and an outlier in the order of $1/\lambda$, where $\lambda$ is a Lam\'{e} constant that is large for locking cases.
It is known that the convergence factor of GMRES is bounded by the radius of the cluster for this type of systems.
Both two- and three-field block triangular Schur complement preconditioners are studied.
Upper bounds of the radius of the eigenvalue cluster for those systems
are obtained and shown to be related to the inf-sup condition but independent
of mesh size, time step, and locking parameters, which reflects the robustness of the preconditioners
with respect to parameter variations. 
Moreover, the developed preconditioners do not need to compute the Schur complement
and neither require exact inversion of diagonal blocks except the leading one.
A locking-free weak Galerkin finite element method and the implicit Euler scheme are used for the discretization
of the governing equation. Both two- and three-dimensional numerical results are presented to confirm the effectiveness
and parameter-robustness of the developed preconditioners.
\end{abstract}

\noindent
\textbf{Keywords:}
Biot's model of poroelasticity, Linear elasticity, Locking-free, Parameter-robust preconditioning, Weak Galerkin.

\noindent
\textbf{Mathematics Subject Classification (2020):}
  65M60, 65F08, 65F10, 74F10

\section{Introduction}
We consider Biot's model \cite{Biot1941}  of linear poroelasticity that is governed by
\begin{equation}
\begin{cases}
  \displaystyle
  -\nabla \cdot
  \big ( 2\mu \varepsilon(\mathbf{u})
    + \lambda (\nabla \cdot \mathbf{u}) \mathbf{I}
  \big )
  + \alpha \nabla p
  = \mathbf{f}, \quad \text{ in } \Omega
\\
  \displaystyle
  \partial_t
  \big ( \alpha \nabla \cdot \mathbf{u} +  c_0 p \big)
    + \nabla \cdot \left( -\mathbf{K} \nabla p \right)
  = s, \quad \text{ in } \Omega
\end{cases}
\label{EqnPoroElas}
\end{equation}
where
$\Omega \in \mathbb{R}^d \; (d \geq 2)$ is a bounded Lipschitz domain,
$ \mathbf{u} $ is the solid displacement,
$ \varepsilon(\mathbf{u}) = \frac{1}{2} ( \nabla \mathbf{u} + (\nabla \mathbf{u})^T ) $
is the strain tensor,
$ \lambda$ and $\mu $ are Lam\'{e} constants,
$ \sigma(\mathbf{u}) = 2\mu \varepsilon(\mathbf{u}) + \lambda (\nabla \cdot \mathbf{u}) \mathbf{I} $
is the Cauchy stress for the solid,
$ \mathbf{I} $ is the identity operator,
$ \mathbf{f} $ is a body force,
$ p $ is the fluid pressure,
$ s $ is the fluid source,
$ \alpha $ (usually close to $1$) is the {Biot-Willis} constant,
$ c_0 \ge 0 $ is the constrained storage capacity,
and $ \mathbf{K} $ is a permeability tensor.
The Lam\'{e} constants are related to the elasticity modulus $E$ and the Poisson's ratio $\nu$ as
\[
\lambda = \frac{\nu E}{(1-2\nu)(1+\nu)},\quad \mu = \frac{E}{2(1+\nu)} .
\]
We consider here $\mathbf{K} = \kappa \mathbf{I}$ and
Dirichlet boundary conditions for both the displacement and pressure, i.e.,
\begin{equation}
\mathbf{u}|_{\partial \Omega} = \mathbf{u}_D, \quad p|_{\partial \Omega} = p_D,
\label{BC-1}
\end{equation}
where $\mathbf{u}_D$ and $p_D$ are given functions.

Poroelasticity problems are important in various engineering, geophysical, and biological applications,
such as in the development of the robust mechanical modeling of tissues and cells \cite{Malandrino_2019}
and the research on the poroelastic component of the postseismic process \cite{McCormack4_Geo_2020}.
{Locking, which occurs when $\lambda \to \infty$ or $\nu \to 0.5$ 
or when $c_0 = 0$ and $\kappa \to 0$, is one of the challenges in developing appropriate numerical methods for poroelasticity problems.
The first situation corresponds to the case when the solid material is nearly incompressible
while the second situation corresponds to the case when the system becomes incompressible
due to low permeability. When locking occurs, the convergence order of the discretization
can deteriorate, and the resulting algebraic system becomes nearly singular.}
{Locking-free numerical methods that have been developed for} poroelasticity problems include 
mixed finite element methods \cite{AmbartKhatYotov_CMAME_2020},
virtual element methods \cite{Burger4_AdvComputMath_2021,Coulet4_ComputGeosci_2020},
discontinuous Galerkin methods \cite{RIVIERE2017666},
enriched Galerkin methods \cite{KADEETHUM2021110030,LeeYi_JSC_2023},
finite volume methods \cite{TEREKHOV2022111225},
and weak Galerkin (WG) finite element methods \cite{Wang2TavLiu_JCAM_2024}.
These numerical methods typically lead to a large-scale linear system at each time step,
and it is crucial that the system is solved efficiently.

A common strategy for the efficient solution of large-scale linear systems is to develop effective and efficient preconditioners
for iterative methods.
There are several recent works in this direction for poroelastic models. 
For example,
Lee et al. \cite{Lee-SISC-2017} introduced parameter-robust three-field block diagonal preconditioners
based on stability consideration and the operator preconditioning approach.
Boon et al. \cite{Boon4_SISC_2021} constructed parameter-robust preconditioners for four-field numerical schemes.
Adler et al. \cite{Adler6_SISC_2020} proposed norm-equivalent and field-of-value-equivalent block preconditioners
for the stabilized discretization of poroelasticity problems in a three-field approach.
A general framework was proposed and several preconditioners for poroelasticity problems in two-
and three-field numerical schemes were discussed using the framework by Chen et al. \cite{ChenHongXuYang_CMAME_2020}.
More recently, Rodrigo et al. \cite{Rodrigo6_SeMA_2024} presented preconditioners for two- and three-field numerical schemes. 
It is pointed out that the existing preconditioners share the common feature that they are equivalent to the coefficient matrix of
the underlying system in terms of spectrum, norm, and/or field of value.
{While they are generally effective and parameter-robust, 
those preconditioners are singular or nearly singular
due to their spectral equivalence to the original system.
This makes them more challenging to construct and their inversion more expensive to carry out than
nonsingular preconditioners.
}

In this work, we consider a different strategy with which we do not seek spectrally equivalent preconditioners.
Instead, we look for nonsingular preconditioners that can take care of non-small eigenvalues while keeping small ones.
With this approach, we have more flexibility to construct preconditioners. Moreover, these preconditioners
are more straightforward to construct and the action of their inversion is less expensive
to carry out.
Analytical analysis and numerical experiments will show that the developed preconditioners are effective and robust
with respect to parameter variations for both linear poroelasticity and elasticity problems.
More specifically, we employ the two-field discretization by a weak Galerkin finite element method coupled
with the implicit Euler scheme. The method has been shown in \cite{Wang2TavLiu_JCAM_2024}
to be free of locking and have optimal-order convergence in pressure, displacement, Darcy velocity, stress, and dilation.
The resulting discrete system is a saddle-point system of two-by-two blocks. Iterative solution and preconditioning
for general saddle-point systems have been
studied extensively; e.g., see \cite{BenziGolubLiesen-2005,Benzi2008} and references therein.
However, for the current saddle-point system, the leading block, i.e., the (1,1) block,
is becoming singular as $\lambda \to \infty$, the (1,2) and (2,1) blocks are singular, and
the (2,2) block is nonzero while becoming singular as $\Delta t \to 0$.
These features make it difficult to apply the existing theory of preconditioning for general saddle-point systems
to the current system. Here, we look for nonsingular preconditioners that
can take care of non-small eigenvalues while keeping small ones. This allows us to explore and use bounds of the Schur complement
and develop preconditioners that do not need to compute the Schur complement.
Two- and three-field block upper triangular Schur complement preconditioners are studied.
{For the two-field forumlation, the poroelasticity problem is solved with a nested iteration
where the outer loop is for the poroelasticity problem and
the inner loop is for its leading block that corresponds to a linear elasticity problem.
A spectrally equivalent, straightforward preconditioner is constructed for the outer loop.
For the inner loop (or for an elasticity problem), a nonsingular, spectrally non-equivalent
preconditioner is used and the eigenvalues of the corresponding preconditioned system
are shown to consist of a cluster of eigenvalues around $1$ and an outlier in the order of $1/\lambda$.
The situation for the three-field formulation is similar to that for the elasticity problem, i.e.,
a nonsingular, spectrally non-equivalent
preconditioner is used and the eigenvalues of the corresponding preconditioned system
contain an outlier in the order of $1/\lambda$ and a cluster around 1.
}
It has been shown by Campbell et al. \cite{Campbell-1996} that the convergence factor of
the generalized minimal residual method (GMRES) \cite{GMRES-1986} is bounded by the radius of the eigenvalue cluster
for this type of systems. For our current situation, upper bounds of the radius are obtained and shown
to be related to the inf-sup condition but independent
of mesh size, time step, and locking parameters, which reflects the robustness of the preconditioners
with respect to parameter variations. 
Numerical results in two and three dimensions are presented to confirm the effectiveness
and parameter-robustness of the developed preconditioners. 

The rest of this paper is organized as follows. 
In Section~\ref{SEC:formulation}, we introduce the variational formulation for (\ref{EqnPoroElas}) and its discretization
by the WG method in space and the implicit Euler scheme in time.
We also discuss the distinct features of the resulting linear system.
In Section~\ref{SEC:2by2}, we study the two-field block triangular preconditioners which are spectrally equivalent to the Schur complement for the two-field poroelasticity problems. The inversion of their leading block is equivalent to solving
a linear elasticity problem which is in turn transformed into a saddle-point problem. Preconditioning for the elasticity saddle-point system is also discussed in the section. The performances of these preconditioners are confirmed in the section
with two-dimensional numerical examples. 
In Section~\ref{SEC:3by3}, we transform the two-field poroelasticity system into a three-field system using the so-called solid pressure and total pressure and  develop three-field preconditioners. 
Numerical experiments in both two and three dimensions are presented to showcase the effectiveness and  parameter-robustness
of the preconditioners.
Section~\ref{SEC:conclusions} contains conclusions and further comments.
A brief discussion on block upper triangular Schur complement preconditioning and the eigenvalues of the preconditioned matrix
for general saddle-point systems is given in Appendix~\ref{appendix_A}.

\section{Weak Galerkin finite element discretizations for poroelasticity}
\label{SEC:formulation}

In this section we describe the weak formulation of the linear poroelasticity problem (\ref{EqnPoroElas}) and its 
discretization in space by a weak Galerkin finite element method and in time by the implicit Euler method.
For notational simplicity, we consider the two-dimensional case here. The description works for other dimensions without
major modifications.

\subsection{Weak formulation of poroelasticity problems}

The weak formulation of the poroelasticity problem (\ref{EqnPoroElas}) is 
to find $(\mathbf{u}(\cdot, t), p(\cdot, t)) \in (H^1(\Omega))^{  d}\times H^1(\Omega)$, $0< t \le T$, such that
the boundary condition (\ref{BC-1}) holds in a weak sense and
\begin{equation}
\begin{cases}
    \mu \Big( \nabla \mathbf{u}, \nabla\mathbf{v} \Big)
      + (\lambda+\mu) ({\nabla \cdot \mathbf{u}}, {\nabla \cdot \mathbf{v}})
      - \alpha (p, {\nabla \cdot \mathbf{v}})
      = (\mathbf{f}, \mathbf{v}),
      \quad \forall \mathbf{v}\in (H_{0}^1(\Omega))^{ d}
\\
    \displaystyle
    -\alpha (\nabla \cdot \mathbf{u}_t, q)
    - c_0 \left( p_t, q \right)
      - \left( \kappa \nabla p, \nabla q \right)
    \displaystyle
    =  -\left( s, q \right),
    \quad \forall q\in H_{0}^1(\Omega)
\end{cases}
  \label{poro_variationalform}
\end{equation}
where $(\cdot, \cdot)$ denotes the $L^2$ inner product over $\Omega$.
For linear elasticity, the grad-div and strain-div formulations are mathematically equivalent 
when pure Dirichlet conditions are used. 
In this work, we consider the above grad-div formulation as in \cite{Wang2TavLiu_JCAM_2024}.

\subsection{Full discretization via weak Galerkin and implicit Euler methods}

We now consider the discretization of (\ref{poro_variationalform}).
For spatial discretization, we employ a weak Galerkin finite element method of Wang et al. \cite{Wang2TavLiu_JCAM_2024}
where the method has been shown to be locking free,
have optimal-order convergence in pressure, displacement, Darcy velocity, stress, and dilation,
and satisfy the inf-sup condition for linear elasticity.

Let $\mathcal{E}_h $ be a shape regular, quasi-uniform convex quadrilateral mesh of $\Omega$.
For any element $E\in\mathcal{E}_h$, denote the diameter of the circumscribed circle of $E$ by $h_E$.
Denote the element size of $ \mathcal{E}_h $ by $ h = \max_{E\in\mathcal{E}_h}h_E $
and the set of all edges of $\mathcal{E}_h$ by $\Gamma_h$.
The finite element spaces are defined in the following.

The discrete (scalar) weak function space $W_h$ is defined as
\begin{equation}
W_h=\Big \{p_h=\{p^\circ_h, p^\partial_h\}\; :\; p^\circ_h|_E\in P_0(E),\; p^\partial_h|_e \in P_0(e),
\; \forall E\in\mathcal{E}_h,\; e \in \Gamma_h\Big\},     
\label{Wh}
\end{equation}
where $P_0(E)$ and $P_0(e)$ denote the sets of constant polynomials defined on $E$ and $e$, respectively.
It is worth emphasizing that any function of $W_h$ has two sets of degrees of freedom, i.e., those defined in the interiors
of the elements and those defined on the element edges.
We will also need to use the (interior) function space,
\[
\mathcal{P}_0(\mathcal{E}_h) = \Big\{p_h=\{p^\circ_h\}\; :\; p^\circ_h|_E\in P_0(E),\; \forall E\in\mathcal{E}_h\Big\} ,
\]
whose functions have the degrees of freedom  only in the element interiors.
Similarly, the discrete vector weak function space $\mathbf{V}_h$ can be defined as
\begin{equation}
\mathbf{V}_h=\Big \{\mathbf{u}_h=\{\mathbf{u}^\circ_h, \mathbf{u}^\partial_h\}\; :\; \mathbf{u}^\circ_h|_E\in P_0(E)^2,
\;\mathbf{u}^\partial_h|_e \in P_0(e)^2,\; \forall E\in\mathcal{E}_h,\; e \in \Gamma_h\Big \} . 
\label{Vh}
\end{equation}
For any element $E$, we denote its centroid by $(x_E, y_E)$. Let $X=x-x_E$ and $Y=y-y_E$. Then, we can define the local 
Arbogast-Correa (AC) space \cite{ArbogastCorreaSIAM2016} of the lowest degree as
\[
  AC_0(E) = \text{span}
  \left\{
    \left[ \begin{array}{r} 1\\ 0 \end{array} \right],\;
    \left[ \begin{array}{r} 0\\ 1 \end{array} \right],\;
    \left[ \begin{array}{r} X\\ Y \end{array} \right],\;
    \mathcal{P}_E \left[ \begin{array}{r} \hat{x}\\ -\hat{y} \end{array} \right]
  \right\},
\]
where $\mathcal{P}_E$ is the Piola operator transforming a vector-valued function $\hat{\mathbf{v}}$ defined
on the reference element $\hat{E}$
into a vector-valued function $\mathbf{v}$ defined on $E$ as
\[
\mathbf{v}(\mathbf{x}) = \mathcal{P}_E (\hat{\mathbf{v}})(\mathbf{x})= \frac{1}{\det(F_E^{'})}
F_E^{'} \hat{\mathbf{v}}(F_E^{-1}(\mathbf{x})),
\]
where $F_E$ is the mapping from $\hat{E}$ to $E$ and $F_E^{'}$ is the Jacobian matrix of $F_E$.
On the three-dimensional domain,
we use the Arbogast-Tao (AT) {\cite{ArboTao_NumerMath_2019}} vector-valued finite element spaces.
The local $AT_0(E)$ space of the lowest order is defined as
$$AT_0(E) =   \left\{
  \displaystyle
  \left[ \begin{array}{r} 1 \\ 0 \\ 0 \end{array} \right], \;\;
  \left[ \begin{array}{r} 0 \\ 1 \\ 0 \end{array} \right], \;\;
  \left[ \begin{array}{r} 0 \\ 0 \\ 1 \end{array} \right], \;\;
  \left[ \begin{array}{r} X \\ Y \\ Z \end{array} \right], \;\;
  \mathcal{P}_E \left[ \begin{array}{r} \hat{x} \\ -\hat{y} \\ 0 \end{array} \right], \;\;
  \mathcal{P}_E \left[ \begin{array}{r} 0 \\ \hat{y} \\ -\hat{z} \end{array} \right] \right\},
$$
where $ Z=z-z_E $ and $z_E$ is the $z$-coordinate of the centroid.

Having defined the weak function spaces, we can now define the discrete weak gradient and divergence operators.
The discrete weak gradient operator $\nabla_w: W_h \rightarrow \mathcal{AC}_0(\mathcal{E}_h)$ is defined elementwise as
\begin{equation*}
  (\nabla_w u_h, \mathbf{w})_E
  = \langle u^\partial_h, \mathbf{w} \cdot \mathbf{n}\rangle_{\partial E}
  - ( u^\circ_h , \nabla \cdot \mathbf{w})_E,
  \quad \forall \mathbf{w} \in AC_0(E),\quad \forall E \in \mathcal{E}_h 
\end{equation*}
where $\mathbf{n}$ is the unit outward normal to $\partial E$ and $(\cdot, \cdot)_E$ and $\langle \cdot, \cdot \rangle_{\partial E}$ are the $L^2$ inner product on $E$ and $\partial E$, respectively.
For vector-valued functions, the discrete weak gradient takes the form
\begin{equation}
  \displaystyle
   (\nabla_w\mathbf{v}_h, W)_E
  = \langle \mathbf{v}^\partial_h, W \mathbf{n}\rangle_{\partial E}
  - ( \mathbf{v}^\circ_h ,\nabla \cdot W)_E,
  \quad
  \forall W \in AC_0(E)^2,\quad \forall E\in\mathcal{E}_h
    \label{WG_Elas_WGrad}
\end{equation}
where $AC_0(E)^2$ is a matrix-valued function space whose row vectors are in $AC_0(E)$
and $\nabla_w \mathbf{v}_h|_E \in AC_0(E)^2$.
The discrete weak divergence operator $\nabla_w \cdot: \mathbf{V}_h \to \mathcal{P}_0(\mathcal{E}_h)$ 
is defined as
\begin{equation}
   (\nabla_w \cdot \mathbf{v}_h, w )_E
  = \langle \mathbf{v}^\partial_h , w \mathbf{n}\rangle_{\partial E}
  - ( \mathbf{v}^\circ_h , \nabla w)_E,
  \quad
  \forall w \in P_0(E), \quad \forall E \in \mathcal{E}_h.
      \label{WG_Elas_WDiv}
\end{equation}
Analytical expressions for the discrete weak gradient and divergence operators
can be found in \cite{HuangWang_CiCP_2015,LiuTavWang_JCP_2018}.

The WG discretization of (\ref{poro_variationalform}) is to find
$ \mathbf{u}_h(\cdot,t)\in \mathbf{V}_h $, $ p_h(\cdot,t)\in W_h$, $0 < t \le T$, such that
$\mathbf{u}_h|_{\partial \Omega}= \mathbf{u}_D^h$,
$ p_h|_{\partial \Omega}=p_D^h $, where $\mathbf{u}_D^h$ and $p_D^h$ are
the $L^2$-projections of $\mathbf{u}_D$ and $p_D$ on $\partial \Omega$, and
\begin{equation}
\begin{cases}
\displaystyle
\mu \sum_{E\in\mathcal{E}_h} (\nabla_w\mathbf{u}_h,\nabla_w\mathbf{v}_h)_E
+(\lambda+\mu) \sum_{E\in\mathcal{E}_h} (\nabla_w\cdot\mathbf{u}_h,\nabla_w\cdot\mathbf{v}_h)_E
\\ 
\displaystyle \qquad \qquad \qquad 
- \alpha \sum_{E \in \mathcal{E}_h}  (p_h^{\circ}, \nabla_{w}\cdot\mathbf{v}_h)_E,
    = \sum_{E\in\mathcal{E}_h}(\mathbf{f},\mathbf{v}_h^{\circ})_E, \quad \forall \mathbf{v}_h \in \mathbf{V}_h^0
\\
\displaystyle
-\alpha \sum_{E \in \mathcal{E}_h}  (\nabla_{w}\cdot\frac{\partial \mathbf{u}_h}{\partial t},q_h^{\circ})_E
- c_0 \sum_{E \in \mathcal{E}_h} (\frac{\partial p_h^{\circ}}{\partial t}, q_h^{\circ})_E 
\\
\displaystyle
\qquad \qquad \qquad 
- \sum_{E\in\mathcal{E}_h}(\kappa\nabla_wp_h,\nabla_wq_h)_E
= - \sum_{E\in\mathcal{E}_h}(s,q_h^{\circ})_E,\quad \forall q_h \in {W}_h^0 .
\end{cases}
\label{EqnSemiDisc2}
\end{equation}

Assume a constant time step size $\Delta t$ is used and denote the time instants by $t_n = n \Delta t$, $ n = 0, 1, ...$.
Then, applying the implicit Euler scheme to (\ref{EqnSemiDisc2}) we obtain the fully discrete WG scheme for (\ref{EqnPoroElas}) 
(at $t = t_n, \, n\geq 1$) as
\begin{equation}
\begin{cases}
\displaystyle
\mu \sum_{E\in\mathcal{E}_h} (\nabla_w\mathbf{u}_h^n,\nabla_w\mathbf{v}_h)_E
+(\lambda+\mu) \sum_{E\in\mathcal{E}_h} (\nabla_w\cdot\mathbf{u}_h^n,\nabla_w\cdot\mathbf{v}_h)_E
\\
\displaystyle
\qquad \qquad \qquad 
- \alpha \sum_{E \in \mathcal{E}_h}  (p_h^{\circ,n}, \nabla_{w}\cdot\mathbf{v}_h)_E,
= \sum_{E\in\mathcal{E}_h}(\mathbf{f}^n,\mathbf{v}_h^{\circ})_E, \quad \forall \mathbf{v}_h \in \mathbf{V}_h^0
\\
\displaystyle
-\alpha \sum_{E \in \mathcal{E}_h}  (\nabla_{w}\cdot\mathbf{u}_h^n,q_h^{\circ})_E
- c_0 \sum_{E \in \mathcal{E}_h} (p_h^{\circ,n}, q_h^{\circ})_E 
- \Delta t \sum_{E\in\mathcal{E}_h}(\kappa\nabla_w p_h^n,\nabla_wq_h)_E
\\
\displaystyle
\qquad \qquad \qquad 
= - \Delta t \sum_{E\in\mathcal{E}_h}(s^n,q_h^{\circ})_E
    -\alpha \sum_{E \in \mathcal{E}_h}  (\nabla_{w}\cdot\mathbf{u}_h^{n-1},q_h^{\circ})_E
\\
\displaystyle
\qquad \qquad \qquad \qquad \qquad \qquad \qquad
- c_0 \sum_{E \in \mathcal{E}_h} (p_h^{\circ,n-1}, q_h^{\circ})_E
    ,\quad \forall q_h \in {W}_h^0 .
\end{cases}
\label{EqnFullDisc1}
\end{equation}
The above equation forms a system of algebraic equations for $\mathbf{u}_h^n$ and $p_h^n$.
Our main goal of this work is to develop parameter-robust and efficient preconditioners for the fast iterative solution of the system. 

We first consider the situation with homogeneous Dirichlet conditions, i.e., $\mathbf{u}_D \equiv 0$ and $p_D \equiv 0$.
In this case, $\mathbf{u}_h^n \in \mathbf{V}_h^0$ and $p_h^n \in W_h^0$. 
Notice that any function $\mathbf{v}_h \in \mathbf{V}_h^0$ can be represented by a two-block vector
$[\mathbf{v}^{\circ},\mathbf{v}^{\partial}]^T$ under a suitable basis for $\mathbf{V}_h^0$, where $\mathbf{v}^{\circ}$
and $\mathbf{v}^{\partial}$ are the vectors representing the degrees of freedom for the interiors and edges of the mesh elements,
respectively. Hereafter, we use $\mathbf{v}_h$ to denote interchangeably a function in $\mathbf{V}_h^0$ and
its vector representation $[\mathbf{v}^{\circ},\mathbf{v}^{\partial}]^T$. Similarly, any function $q_h$ in $W_h^0$ can
be represented by the vector $\mathbf{q}_h = [\mathbf{q}^{\circ},\mathbf{q}^{\partial}]^T$. With this notation, we can cast
(\ref{EqnFullDisc1}) into a matrix-vector form as
\begin{equation}
    \begin{bmatrix}
        \mu A_1  + ( \lambda + \mu ) A_0 & \alpha B^T \\
        \alpha B & -D
    \end{bmatrix}
    \begin{bmatrix}
        \mathbf{u}_h \\
        \mathbf{p}_h
    \end{bmatrix}
    =
    \begin{bmatrix}
        \mathbf{b}_1 \\
        \mathbf{b}_2
    \end{bmatrix},
    \label{2by2Scheme_matrix}
\end{equation}
where the matrices $A_0$, $A_1$, $B$, and $D$ are defined as
\begin{align}
& \mathbf{v}_h^T A_0 \mathbf{u}_h = \sum_{E\in\mathcal{E}_h} (\nabla_w\cdot\mathbf{u}_h,\nabla_w\cdot\mathbf{v}_h)_E,
\quad \forall \mathbf{u}_h, \mathbf{v}_h \in \mathbf{V}_h^0
\label{A0-1}
\\
& \mathbf{v}_h^T A_1 \mathbf{u}_h = \sum_{E\in\mathcal{E}_h} (\nabla_w\mathbf{u}_h,\nabla_w\mathbf{v}_h)_E,
\quad \forall \mathbf{u}_h, \mathbf{v}_h \in \mathbf{V}_h^0
\label{A1-1}
\\
& \mathbf{q}_h^T B \mathbf{u}_h = - \sum_{E \in \mathcal{E}_h}  (\nabla_{w}\cdot\mathbf{u}_h,q_h^{\circ})_E,
\quad \forall \mathbf{u}_h \in \mathbf{V}_h^0, \quad \forall q_h \in W_h^0
\label{B-1}
\\
& \mathbf{q}_h^T D \mathbf{q}_h = c_0 \sum_{E \in \mathcal{E}_h} (p_h^{\circ}, q_h^{\circ})_E 
+ \Delta t \sum_{E\in\mathcal{E}_h}(\kappa\nabla_w p_h,\nabla_wq_h)_E,
\quad \forall p_h, q_h \in W_h^0 .
\label{D-1}
\end{align}
The right-hand side vectors $\mathbf{b}_1$ and $\mathbf{b}_2$ are defined accordingly.

For the general situation with not-all-zero Dirichlet boundary conditions, the system (\ref{EqnFullDisc1}) can still be
cast in the form (\ref{2by2Scheme_matrix}) except that the right-hand side vectors should be modified to include
the left-hand side terms associated with the degrees of freedom on the domain boundary for both the displacement and
pressure.

In the following we explore the structures of the matrices in (\ref{2by2Scheme_matrix})
that are useful for the development of preconditioners.
First, the absence of $q_h^{\partial}$ in the right-hand side of (\ref{B-1}) implies that $B$
has the structure
\begin{equation}
B = \begin{bmatrix} B^{\circ} \\ 0 \end{bmatrix},
\label{B-2}
\end{equation}
where the row number of the zero block is equal to the number of the degrees of freedom of $q_h^{\partial}$.
The above equation implies that $B$ is singular and (\ref{B-1}) can be rewritten as
\begin{equation}
    (\mathbf{q}_h^{\circ})^T B^{\circ} \mathbf{u}_h
    = - \sum_{E \in \mathcal{E}_h}  (\nabla_{w}\cdot\mathbf{u}_h,q_h^{\circ})_E,
\quad \forall \mathbf{u}_h \in \mathbf{V}_h^0, \quad \forall q_h \in W_h^0 .
\label{B-3}
\end{equation}

Secondly, define $w_h = \nabla_w \cdot \mathbf{u}_h$.
Recalling that $w_h \in \mathcal{P}_0(\mathcal{E}_h)$, we have
\[
\sum_{E \in \mathcal{E}_h} (w_h^{\circ}, q_h^{\circ})_E =
\sum_{E \in \mathcal{E}_h}  (\nabla_{w}\cdot\mathbf{u}_h,q_h^{\circ})_E,\quad
\forall q_h \in \mathcal{P}_0(\mathcal{E}_h) .
\]
This, combined with (\ref{B-3}), yields
\[
(\mathbf{q}_h^{\circ})^T M_p^{\circ} \mathbf{w}_h^{\circ} = - (\mathbf{q}_h^{\circ})^T B^{\circ} \mathbf{u}_h,
\quad \forall q_h \in \mathcal{P}_0(\mathcal{E}_h)
\]
where $M_p^{\circ}$ is the mass matrix of interior pressure. Thus, we get
\begin{equation}
\mathbf{w}_h^{\circ} = - (M_p^{\circ})^{-1} B^{\circ} \mathbf{u}_h
\quad \text{for} \quad  w_h = \nabla_w \cdot \mathbf{u}_h.
\label{w-1}
\end{equation}
Using this result and (\ref{B-3}), we can rewrite (\ref{A0-1}) into
\begin{align*}
\mathbf{u}_h^T A_0 \mathbf{v}_h & = \sum_{E\in\mathcal{E}_h} (\nabla_w\cdot\mathbf{v}_h,\nabla_w\cdot\mathbf{u}_h)_E 
= - (\mathbf{w}_h^{\circ})^T B^{\circ} \mathbf{v}_h
\\
&
= \mathbf{u}_h^T (B^{\circ})^T (M_p^{\circ})^{-1} B^{\circ} \mathbf{v}_h,
\quad \forall \mathbf{u}_h, \mathbf{v}_h \in \mathbf{V}_h^0
\end{align*}
which gives
\begin{equation}
    A_0 = (B^{\circ})^T (M_p^{\circ})^{-1} B^{\circ} .
\label{A0-2}
\end{equation}
Since the row number of $B^{\circ}$ (which is equal to the number of the degrees of freedom of $p_h^{\circ}$)
is less than its column number (which is equal to the number of the degrees of freedom of $\mathbf{u}_h$),
$B^{\circ}$ does not have full rank and (\ref{A0-2}) implies that $A_0$ is singular.

Thirdly, the matrix $D$ has the expression as
\begin{equation}
D = c_0 \begin{bmatrix}
    M_p^{\circ} & 0 \\[0.1in]
    0 & 0
\end{bmatrix}
+ { \kappa \Delta t } A_p,
\label{D-2}
\end{equation}
where $A_p$ is the stiffness matrix of the Laplacian operator for pressure. Thus, $D$ is becoming singular as $\Delta t \to 0$.
This is a distinct feature of the WG discretization that requires special treatments in the development of preconditioners.
Similarly, the right-hand side $\mathbf{b}_2$ has the structure
\begin{equation}
\mathbf{b}_2 = \begin{bmatrix} \mathbf{b}_2^{\circ} \\ 0 \end{bmatrix} .
\label{b2-1}
\end{equation}

Finally, it is worth emphasizing that $A_1$, $A_p$, and $D$ are symmetric and positive definite (SPD), $A_0$ is
symmetric and positive semi-definite, and (\ref{2by2Scheme_matrix}) is a saddle-point problem whose coefficient matrix
has both positive and negative eigenvalues and is indefinite.

We now turn our attention to (\ref{2by2Scheme_matrix}) and would like to develop parameter-robust and efficient preconditioners for
its fast iterative solution. In our computation, we use the GMRES method
since it works for indefinite systems and with non-symmetric preconditioners.
Other Krylov subspace methods with similar properties can also be used.
Recall that (\ref{2by2Scheme_matrix}) is a saddle-point system. In the past, iterative solution and
preconditioning for saddle-point systems have been studied extensively; e.g.,
see \cite{BenziGolubLiesen-2005,Benzi2008} and references therein.
We use the Schur complement preconditioning in this work. 
While this technique has been studied greatly so far,
its application to (\ref{2by2Scheme_matrix}) still poses challenges due to the distinct features of the system.
First, the (2,2) block of (\ref{2by2Scheme_matrix}), $D$, is nonzero. As we can see in (\ref{D-2}), it is not exactly the mass
matrix nor the stiffness matrix of the Laplacian operator, and is becoming singular as $\Delta t \to 0$.
Moreover, the (1,2) and (2,1) blocks are singular.
Furthermore, the (1,1) block (the leading block)
is becoming singular as $\lambda \to \infty$ while $\mu$ stays bounded (the locking case).
This last feature can be seen more clearly if we divide both sides of (\ref{2by2Scheme_matrix}) by $( \lambda + \mu )$, 
\begin{equation}
    \mathcal{A}_2 
    \begin{bmatrix}
        \mathbf{u}_h \\
        \mathbf{p}_h
    \end{bmatrix}
    = \frac{\epsilon}{\mu}
    \begin{bmatrix}
        \mathbf{b}_1 \\
        \mathbf{b}_2
    \end{bmatrix},
    \qquad \mathcal{A}_2 = \begin{bmatrix}
        \epsilon A_1  +  A_0 & \frac{\alpha \epsilon}{\mu } B^T \\[0.05in]
        \frac{\alpha \epsilon}{\mu} B & -\frac{\epsilon}{\mu }D
    \end{bmatrix}, 
    \label{2by2Scheme_matrix2}
\end{equation}
where $\epsilon = \frac{\mu}{ \lambda + \mu }$.
Recall that $A_0$ is positive semi-definite. As $\lambda \to \infty$ (i.e., $\epsilon \to 0$), the (1,1) block becomes singular.
These features make (\ref{2by2Scheme_matrix2}) distinct from other saddle-point systems and existing preconditioners
for general saddle-point systems do not work for (\ref{2by2Scheme_matrix2}). In the next two sections, we study
the Schur complement preconditioning for (\ref{2by2Scheme_matrix2}).
Although our study focuses on
locking situation (i.e., $\epsilon \to 0$), the analysis applies to non-locking situations as well.

\section {Two-field Schur complement preconditioning}
\label{SEC:2by2}

In this section we study the two-field Schur complement preconditioning for (\ref{2by2Scheme_matrix2}).

\subsection{Two-field Schur complement preconditioners}
\label{SEC:2-field-poroe}

We start with noticing that part of the second block of (\ref{2by2Scheme_matrix2}) becomes zero as $\Delta t \to 0$; see (\ref{B-2}) and (\ref{D-2}). This is a distinct feature of the WG discretization.
In principle, we can work directly with (\ref{2by2Scheme_matrix2}) but this leads to an undesired upper bound
proportional to $1/\Delta t$ even for the case with $c_0 > 0$ (cf. (\ref{pro:2-field-1})).
To circumvent this, we first eliminate the degrees of freedom of pressure on edges, develop preconditioners for the resulting reduced system, and then use them to derive preconditioners for the original system (\ref{2by2Scheme_matrix2}).
To this end, we partition the pressure vector and the Laplacian operator for pressure as 
\begin{equation}
\mathbf{p}_h = \begin{bmatrix} \mathbf{p}_h^{\circ} \\[0.05in]
\mathbf{p}_h^{\partial}\end{bmatrix},\quad
A_p = \begin{bmatrix} A_p^{\circ\circ} & A_p^{\circ\partial}
\\[0.05in] A_p^{\partial\circ} & A_p^{\partial\partial} \end{bmatrix} .
\label{Ap-1}
\end{equation}
Since $A_p$ is SPD, so are $A_p^{\circ\circ}$, $A_p^{\partial\partial}$, and 
$A_p^{\circ\circ}-A_p^{\circ\partial} (A_p^{\partial\partial})^{-1} A_p^{\partial\circ}$.
From (\ref{B-2}), (\ref{D-2}), and (\ref{b2-1}) we can rewrite the second block of (\ref{2by2Scheme_matrix2}) as
\[
\begin{cases}
\displaystyle \frac{\alpha \epsilon}{\mu} B^{\circ}  \mathbf{u}_h
- \frac{\epsilon }{\mu} \left [ (c_0 M_p^{\circ} + { \kappa} \Delta t A_p^{\circ\circ} ) \mathbf{p}_h^{\circ} + {\kappa} \Delta t A_p^{\circ\partial}  \mathbf{p}_h^{\partial} \right] = \frac{\epsilon}{\mu} \mathbf{b}_2^{\circ},
\\[10pt]
\displaystyle - \frac{\epsilon}{\mu} \left [
{\kappa} \Delta t A_p^{\partial\circ} \mathbf{p}_h^{\circ}
+ { \kappa} \Delta t A_p^{\partial\partial} \mathbf{p}_h^{\partial}
\right ] = 0 .
\end{cases}
\]
Solving the second equation for $\mathbf{p}_h^{\partial}$ and inserting it into the first equation, we obtain
\begin{equation}
\begin{cases}
\displaystyle \frac{\alpha \epsilon}{\mu} B^{\circ} \mathbf{u}_h
- \frac{\epsilon}{\mu} \left [ c_0 M_p^{\circ} + { \kappa}  \Delta t \left (A_p^{\circ\circ}-A_p^{\circ\partial} (A_p^{\partial\partial})^{-1}A_p^{\partial\circ} \right ) \right ] \mathbf{p}_h^{\circ} = \frac{\epsilon}{\mu} \mathbf{b}_2^{\circ} ,
\\[10pt]
\displaystyle \mathbf{p}_h^{\partial} = - (A_p^{\partial\partial})^{-1} A_p^{\partial\circ} \mathbf{p}_h^{\circ} .
\end{cases}
\label{pressure-2}
\end{equation}
Using this, we can rewrite (\ref{2by2Scheme_matrix2}) into
\begin{equation}
    \tilde{\mathcal{A}}_{2}
    \begin{bmatrix}
        \mathbf{u}_h \\
        \mathbf{p}_h^{\circ}
    \end{bmatrix}
    = \frac{\epsilon}{\mu}
    \begin{bmatrix}
        \mathbf{b}_1 \\
        \mathbf{b}_2^{\circ}
    \end{bmatrix},
    \label{2by2Scheme_matrix2-2}
\end{equation}
where
\begin{equation}
    \label{D-3}
\tilde{\mathcal{A}}_{2} = 
    \begin{bmatrix}
        \epsilon A_1  +  A_0 & \frac{\alpha \epsilon}{\mu } (B^{\circ})^T \\[0.05in]
        \frac{\alpha \epsilon}{\mu} B^{\circ} & -\frac{\epsilon}{\mu }\tilde{D}
    \end{bmatrix},
    \quad
    \tilde{D} = c_0 M_p^{\circ} + { \kappa} \Delta t \left (A_p^{\circ\circ}-A_p^{\circ\partial} (A_p^{\partial\partial})^{-1}A_p^{\partial\circ} \right ) .
\end{equation}
For this system, the Schur complement is
\begin{equation}
\label{S2-1}
\tilde{S}_2 = \frac{\epsilon}{\mu}\tilde{D} + \left (\frac{\alpha \epsilon}{\mu }\right )^2 B^{\circ}\left (\epsilon A_1  +  A_0\right )^{-1}(B^{\circ})^T .
\end{equation}
The ideal block triangular preconditioner is
\begin{align}
\tilde{\mathcal{P}}_{2,ideal} = 
    \begin{bmatrix}
         \epsilon A_1  +  A_0 & \frac{\alpha \epsilon }{\mu } (B^{\circ})^T \\[0.04in]
        0 & - \tilde{S}_2
    \end{bmatrix}.
    \label{P2-1}
\end{align}
It is known (e.g., see \cite{Benzi2008,MurphyGolubWathen_SISC_2000} and Appendix~\ref{appendix_A}) that
$\tilde{\mathcal{P}}_{2,ideal}^{-1} \tilde{\mathcal{A}}_{2}$ has a single eigenvalue 1.
This means that it only takes a few steps if an iterative solver is applied to the preconditioned system.
Unfortunately, $\tilde{\mathcal{P}}_{2,ideal}$ is not practical since it requires to compute
the Schur complement $\tilde{S}_{2}$ (a dense matrix) and its inverse.
In practice, $\tilde{S}_2$ is replaced by an approximation; see the discussion
on Schur complement preconditioning for general saddle-point systems in Appendix~\ref{appendix_A}.
To this end, we establish some bounds for $\tilde{S}_2$ in the following.

\begin{lem}
\label{lem:3.1}
Let $A$ be an arbitrary SPD matrix of size $m$ and $C$ be an arbitrary matrix with a proper number of columns.
Then,
\[
    \| C A^{-1} C^T \| = \sup_{\mathbf{x}\neq 0}
    \frac{\mathbf{x}^T C^T C \mathbf{x}}{\mathbf{x}^T A  \mathbf{x}} .
\]
\end{lem}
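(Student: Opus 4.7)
The plan is to reduce both sides of the claimed identity to the common quantity $\|C A^{-1/2}\|^2$, where $A^{1/2}$ denotes the unique SPD square root of $A$ and $A^{-1/2}$ its inverse. This square root exists because $A$ is SPD and can be constructed from its spectral decomposition.

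First I would handle the left-hand side by factoring
\[
C A^{-1} C^T = \bigl(C A^{-1/2}\bigr)\bigl(C A^{-1/2}\bigr)^T,
\]
and then invoking the standard identity $\|M M^T\| = \|M\|^2$ (both sides equal the square of the largest singular value of $M$) to conclude that $\|C A^{-1} C^T\| = \|C A^{-1/2}\|^2$.

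Next I would treat the right-hand side. Since $A^{1/2}$ is SPD and hence invertible, the change of variables $\mathbf{y} = A^{1/2}\mathbf{x}$ is a bijection on nonzero vectors, so
\[
\sup_{\mathbf{x}\neq 0} \frac{\mathbf{x}^T C^T C \mathbf{x}}{\mathbf{x}^T A \mathbf{x}}
= \sup_{\mathbf{y}\neq 0} \frac{\mathbf{y}^T A^{-1/2} C^T C A^{-1/2} \mathbf{y}}{\mathbf{y}^T \mathbf{y}}
= \bigl\| A^{-1/2} C^T C A^{-1/2} \bigr\|,
\]
where the last equality is the Rayleigh quotient characterization of the spectral norm for the symmetric positive semidefinite matrix $A^{-1/2} C^T C A^{-1/2}$. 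Since this matrix equals $(C A^{-1/2})^T (C A^{-1/2})$, applying $\|M^T M\| = \|M\|^2$ once more yields $\|C A^{-1/2}\|^2$, matching what I obtained on the left.

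I do not anticipate any serious obstacle; the argument uses only three elementary ingredients, namely the existence of the SPD square root of an SPD matrix, the Rayleigh quotient characterization of the spectral norm of a symmetric matrix, and the identity $\|M M^T\| = \|M^T M\| = \|M\|^2$. The only mild point to watch is making the bijection $\mathbf{x} \leftrightarrow A^{1/2}\mathbf{x}$ explicit so that the two suprema are over the same (punctured) space, after which the identification with $\|C A^{-1/2}\|^2$ on both sides closes the proof.
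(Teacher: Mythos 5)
Your argument is correct and is essentially the paper's own proof: both factor through $A^{-1/2}$, use the fact that $C A^{-1/2}(C A^{-1/2})^T$ and $(C A^{-1/2})^T C A^{-1/2}$ share their norm (nonzero spectrum), and identify the supremum with a Rayleigh quotient via the substitution $\mathbf{y}=A^{1/2}\mathbf{x}$. The only cosmetic difference is that you route both sides through the common value $\|C A^{-1/2}\|^2$ rather than chaining the equalities directly.
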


\begin{proof}
Notice that the matrices
\[
C A^{-1} C^T = (C A^{-\frac{1}{2}}) (C A^{-\frac{1}{2}})^T, \quad
(C A^{-\frac{1}{2}})^T (C A^{-\frac{1}{2}}) = A^{-\frac{1}{2}} C^T C A^{-\frac{1}{2}}
\]
have the same non-zero
eigenvalues. Then, we have
\begin{align*}
\| C A^{-1} C^T \|
& = \sup_{\mathbf{x}\neq 0} \frac{\mathbf{x}^T A^{-\frac{1}{2}} C^T C A^{-\frac{1}{2}} \mathbf{x}}{\mathbf{x}^T \mathbf{x}}
= \sup_{\mathbf{x}\neq 0} \frac{\mathbf{x}^T C^T C \mathbf{x}}{\mathbf{x}^T A \mathbf{x}} .
\end{align*}
\end{proof}

\begin{pro}
\label{pro:2-field}
\begin{align}
\frac{\epsilon}{\mu}\tilde{D} \le \tilde{S}_2 \le
\begin{cases} \displaystyle
\frac{\epsilon}{\mu}\tilde{D} \left (1 + \frac{\alpha^2 \epsilon}{\mu c_0} \right ), \quad & \text{ for } c_0 > 0
\\[0.1in]
\displaystyle \frac{\epsilon}{\mu}\tilde{D} \left (1 + \frac{\alpha^2 \epsilon \lambda_{\max}(M_p^{\circ})}{\mu {\kappa} \Delta t \lambda_{\min}\left (A_p^{\circ\circ}-A_p^{\circ\partial} (A_p^{\partial\partial})^{-1}A_p^{\partial\circ} \right ) } \right ), \quad & \text{ for } c_0 = 0 
\end{cases}
\label{pro:2-field-1}
\end{align}
where $\le$ is in the negative semi-definite sense and $\lambda_{\max}(M_p^{\circ})$ denotes the maximum eigenvalue of
$M_p^{\circ}$, and $\lambda_{\min}\left (A_p^{\circ\circ}-A_p^{\circ\partial} (A_p^{\partial\partial})^{-1}A_p^{\partial\circ} \right )$
denotes the minimum eigenvalue of $A_p^{\circ\circ}-A_p^{\circ\partial} (A_p^{\partial\partial})^{-1}A_p^{\partial\circ}$.
\end{pro}

\begin{proof}
The left inequality of (\ref{pro:2-field-1}) follows from (\ref{S2-1}) and the fact that $\epsilon A_1 + A_0$ is SPD.

For the right inequality, we first consider the case with $c_0 > 0$. From (\ref{A0-2}), (\ref{D-3}), and Lemma~\ref{lem:3.1}  we have
\begin{align*}
& \sup_{\mathbf{v} \neq 0} \frac{\mathbf{v}^T B^{\circ}  (\epsilon A_1 + A_0)^{-1} (B^{\circ})^T \mathbf{v}}{\mathbf{v}^T \tilde{D} \mathbf{v}}
\le \sup_{\mathbf{v} \neq 0} \frac{\mathbf{v}^T B^{\circ}  (\epsilon A_1 + A_0)^{-1} (B^{\circ} )^T \mathbf{v}} {c_0 \mathbf{v}^T M_{p}^{\circ} \mathbf{v}}
\\
& = \frac{1}{c_0} \| (M_{p}^{\circ})^{-\frac12} B^{\circ}  (\epsilon A_1 + A_0)^{-1} (B^{\circ})^T (M_{p}^{\circ})^{-\frac12} \|
= \frac{1}{c_0} \sup_{\mathbf{u} \neq 0} \frac{\mathbf{u}^T (B^{\circ})^T (M_{p}^{\circ})^{-1} B^{\circ} \mathbf{u}}{\mathbf{u}^T (\epsilon A_1 + A_0) \mathbf{u}} 
\\
& = \frac{1}{c_0} \sup_{\mathbf{u} \neq 0} \frac{\mathbf{u}^T A_0 \mathbf{u}}{\mathbf{u}^T (\epsilon A_1 + A_0) \mathbf{u}}
\le \frac{1}{c_0} .
\end{align*}
Combining this with (\ref{S2-1}), we have
\[
\tilde{S}_2 \le \frac{\epsilon}{\mu}\tilde{D} \left (1 + \frac{\alpha^2 \epsilon}{\mu c_0} \right ) .
\]

When $c_0 = 0$, from (\ref{D-3}) and Lemma~\ref{lem:3.1} we have
\begin{align*}
& \sup_{\mathbf{v} \neq 0} \frac{\mathbf{v}^T B^{\circ}  (\epsilon A_1 + A_0)^{-1} (B^{\circ})^T \mathbf{v}}{\mathbf{v}^T \tilde{D} \mathbf{v}}
 = \sup_{\mathbf{v} \neq 0} \frac{\mathbf{v}^T B^{\circ}  (\epsilon A_1 + A_0)^{-1} (B^{\circ} )^T \mathbf{v}} {{ \kappa} \Delta t \mathbf{v}^T \left (A_p^{\circ\circ}-A_p^{\circ\partial} (A_p^{\partial\partial})^{-1}A_p^{\partial\circ} \right ) \mathbf{v}}
\\
& \le \frac{1}{{ \kappa} \Delta t \lambda_{min}\left (A_p^{\circ\circ}-A_p^{\circ\partial} (A_p^{\partial\partial})^{-1}A_p^{\partial\circ} \right )}
\| B^{\circ}  (\epsilon A_1 + A_0)^{-1} (B^{\circ} )^T \| 
\\
& = \frac{1}{{ \kappa} \Delta t \lambda_{min}\left (A_p^{\circ\circ}-A_p^{\circ\partial} (A_p^{\partial\partial})^{-1}A_p^{\partial\circ} \right )}
\sup_{\mathbf{u} \neq 0} \frac{\mathbf{u}^T (B^{\circ})^T B^{\circ} \mathbf{u}}{\mathbf{u}^T (\epsilon A_1 + A_0) \mathbf{u}} 
\\
& \le \frac{\lambda_{\max}(M_p^{\circ})}{{ \kappa} \Delta t \lambda_{\min}\left (A_p^{\circ\circ}-A_p^{\circ\partial} (A_p^{\partial\partial})^{-1}A_p^{\partial\circ} \right )}
\sup_{\mathbf{u} \neq 0} \frac{\mathbf{u}^T (B^{\circ})^T (M_p^{\circ})^{-1} B^{\circ} \mathbf{u}}{\mathbf{u}^T (\epsilon A_1 + A_0) \mathbf{u}} 
\\
& \le \frac{\lambda_{\max}(M_p^{\circ})}{{ \kappa} \Delta t \lambda_{\min}\left (A_p^{\circ\circ}-A_p^{\circ\partial} (A_p^{\partial\partial})^{-1}A_p^{\partial\circ} \right )} .
\end{align*}
Thus, we have
\[
\tilde{S}_2 \le \frac{\epsilon}{\mu}\tilde{D} \left (1 + \frac{\alpha^2 \epsilon \lambda_{\max}(M_p^{\circ})}{\mu { \kappa} \Delta t \lambda_{\min}\left (A_p^{\circ\circ}-A_p^{\circ\partial} (A_p^{\partial\partial})^{-1}A_p^{\partial\circ} \right ) } \right ) .
\]
\end{proof}

For quasi-uniform meshes, it is known (see, e.g., \cite{LinLiuFarrah_JCAM_2015})
that
\[
\lambda_{\max}(M_p^{\circ}) = \mathcal{O}(h^2), \quad 
\lambda_{\min}\left (A_p^{\circ\circ}-A_p^{\circ\partial} (A_p^{\partial\partial})^{-1}A_p^{\partial\circ} \right )
\approx \lambda_{\min}(A_p) = \mathcal{O}(h^2).
\]
In this case, the factor on the right-hand side of (\ref{pro:2-field-1}) for $c_0 = 0$ becomes
\[
1 + \frac{\alpha^2 \epsilon \lambda_{\max}(M_p^{\circ})}{{ \kappa} \Delta t \mu \lambda_{\min}\left (A_p^{\circ\circ}-A_p^{\circ\partial} (A_p^{\partial\partial})^{-1}A_p^{\partial\circ} \right )}
\le 1 + \frac{ C \alpha^2 \epsilon }{{\kappa} \Delta t \mu},
\]
where $C$ is a constant.

Proposition~\ref{pro:2-field} shows that $\tilde{S}_2$ is spectrally equivalent to $\frac{\epsilon}{\mu}\tilde{D}$.
Thus, we define a block triangular preconditioner for (\ref{2by2Scheme_matrix2-2}) as
\begin{equation}
\label{P2-2}
\tilde{\mathcal{P}}_2 = \begin{bmatrix}
         \epsilon A_1  +  A_0 & \frac{\alpha \epsilon}{\mu} (B^{\circ})^T \\[0.04in]
        0 & -\frac{\epsilon}{\mu}\tilde{D}
    \end{bmatrix} .
\end{equation}
From Lemma~\ref{lem:SPP} and Proposition~\ref{pro:2-field}, the eigenvalues of the preconditioned matrix 
$\tilde{\mathcal{P}}_2^{-1} \tilde{\mathcal{A}}_2$ are bounded by
\begin{equation}
\label{2-field-eigen}
1 \le \lambda_j \le
\begin{cases} \displaystyle 1 + \frac{\alpha^2 \epsilon}{\mu c_0}, \quad & \text{ for } c_0 > 0
\\[0.05in]
\displaystyle 1 + \frac{\alpha^2 \epsilon \lambda_{\max}(M_p^{\circ})}{\mu { \kappa} \Delta t \lambda_{\min}\left (A_p^{\circ\circ}-A_p^{\circ\partial} (A_p^{\partial\partial})^{-1}A_p^{\partial\circ} \right ) }, \quad & \text{ for } c_0 = 0 . 
\end{cases}
\end{equation}
For $c_0 > 0$, the upper bound is independent of
{ $\kappa \Delta t$ and $h$} and tends to 1
as $\epsilon \to 0$. Thus, we can expect $\tilde{\mathcal{P}}_2$ to work well and be robust about
the locking parameter $\lambda$, time step $\Delta t$, { permeability $\kappa$,} and mesh size $h$. For the case $c_0 = 0$,
the upper bound is more complicated. Nevertheless, it tends to 1 as $\epsilon \to 0$
and remains at a reasonable size for all parameter variations except when ${ \kappa}\Delta t \ll \epsilon$.
Therefore, we can expect the preconditioner to work well also for $c_0 = 0$
except when ${ \kappa} \Delta t \ll \epsilon$.
{In the above analysis, we have seen that $\kappa$ and $\Delta t$ act together as a single
effective parameter, through their product $\kappa \Delta t$; changes to one can be offset by inverse changes
to the other. For this reason, we fix $\kappa = 1$ and only consider changes to $\Delta t$
in our numerical examples.}

Note that (\ref{2-field-eigen}) also works for $c_0 = C \epsilon$ for some positive constant $C$, a case considered by
several researchers (e.g., see \cite{Lee-SISC-2017}). In this case, the eigenvalues are bounded
by $1+ \alpha^2/C \mu$.

The preconditioner $\tilde{\mathcal{P}}_2$ is for the linear system (\ref{2by2Scheme_matrix2-2}). It is more convenient
and economic to work directly with the original system (\ref{2by2Scheme_matrix2}) that does not involve
$(A_p^{\partial \partial})^{-1}$.
Recall the reduced system (\ref{2by2Scheme_matrix2-2}) involves unknown variables $\mathbf{u}_h$ and $\mathbf{p}_h^{\circ}$.
Using the relation between $\mathbf{p}_h^{\circ}$ and $\mathbf{p}_h^{\partial}$ (cf. the second equation of (\ref{pressure-2})),
we can rewrite (\ref{2by2Scheme_matrix2-2}) back into the system (\ref{2by2Scheme_matrix2}) for unknown variables
$\mathbf{u}_h$ and $\mathbf{p}_h = [\mathbf{p}_h^{\circ},\mathbf{p}_h^{\partial}]$.
Similarly, from $\tilde{\mathcal{P}}_2$ we can obtain
\begin{equation}
\label{P2}
\mathcal{P}_2 = \begin{bmatrix}
         \epsilon A_1  +  A_0 & \frac{\alpha \epsilon}{\mu} B^T \\[0.04in]
        0 & -\frac{\epsilon}{\mu} D
    \end{bmatrix} .
\end{equation}
It is not difficult to show that the eigenvalues of $\mathcal{P}_2^{-1} \mathcal{A}_2$ are given by 1 and those of $\tilde{\mathcal{P}}_2^{-1} \tilde{\mathcal{A}}_2$ and thus, bounded by (\ref{2-field-eigen}) as well.

For efficiency, we can replace $D$ by its incomplete Cholesky decomposition and obtain
\begin{equation}
\label{P2DLU}
\mathcal{P}_{2,DLU} = \begin{bmatrix}
         \epsilon A_1  +  A_0 & \frac{\alpha \epsilon}{\mu} B^T \\[0.04in]
        0 & -\frac{\epsilon}{\mu} L_D L_D^T
    \end{bmatrix} .
\end{equation}
As we can see from numerical experiments in Section~\ref{SEC:2-field-numerics}, this does not affect
the performance of the preconditioner significantly.

On the other hand, our limited experience shows that replacing the leading block $\epsilon A_1  +  A_0$ with
its incomplete Cholesky decomposition can worsen the performance of the preconditioner significantly.
Moreover, regularization is needed to perform its incomplete Cholesky decomposition since it is becoming
singular as $\epsilon \to 0$. For this reason, we keep the leading block in its form
in $\mathcal{P}_2$ and $\mathcal{P}_{2,DLU}$. Of course, this means we need to solve
linear systems associated with $\epsilon A_1  +  A_0$ in the implementation of these preconditioners,
which can be done by direct or iterative solvers, in theory. The former is not desired when the size of the system is large.
For the latter, commonly used iterative solvers, such as
multigrid, algebraic multigrid, and preconditioned Krylov subspace methods, converge slowly for small $\epsilon$.
In the next subsection we consider the efficient iterative solution of systems associated with $\epsilon A_1  +  A_0$
with GMRES and the Schur complement preconditioning.

\subsection{Schur complement preconditioning for linear elasticity problems}
\label{SEC:linear-elasticity}

In this subsection we consider preconditioning for solving linear systems associated with $\epsilon A_1  +  A_0$.
Such a system is actually a linear elasticity problem.
To be specific, we consider
\begin{equation}
\label{elasticity-1}
(\epsilon A_1 + A_0) \mathbf{u}_h = \mathbf{b} .
\end{equation}
Recall that $A_0 = (B^{\circ})^T (M_p^{\circ})^{-1} B^{\circ}$ (cf. (\ref{A0-1})). 
Define a new variable
\[
\mathbf{w}_h = - (M_p^{\circ})^{-1} B^{\circ} \mathbf{u}_h
\]
and rewrite (\ref{elasticity-1}) into a saddle-point problem,
\begin{equation}
\label{elasticity-2}
\mathcal{A}_{2,e}
\begin{bmatrix}
    \epsilon \mathbf{u}_h \\ \mathbf{w}_h 
\end{bmatrix}
= \begin{bmatrix}
    \mathbf{b} \\ 0 
\end{bmatrix} ,
\qquad
\mathcal{A}_{2,e} = \begin{bmatrix}
    A_1 & - (B^{\circ})^T \\ - B^{\circ} & - \epsilon M_p^{\circ}
\end{bmatrix} .
\end{equation}
The Schur complement for this system is
\begin{equation}
\label{S2-e}
S_{2,e} = \epsilon M_p^{\circ} + B^{\circ} A_1^{-1} (B^{\circ})^T .
\end{equation}

\begin{pro}
\label{pro:2-field-e}
\begin{equation}
\label{S2-e-2}
\epsilon M_p^{\circ} \le    S_{2,e} \le M_p^{\circ} (\epsilon + {d}) .
\end{equation}
\end{pro}

\begin{proof}
The proof is similar to that of Proposition~\ref{pro:2-field} except that here we need to use the inequality
\begin{equation}
\mathbf{u}_h^T A_0 \mathbf{u}_h = \sum_{E}(\nabla_w \cdot \mathbf{u_h},\nabla_w \cdot \mathbf{u_h})_E \le
{d} \sum_{E}(\nabla_w \mathbf{u}_h,\nabla_w \mathbf{u}_h )_E = {d} \mathbf{u}_h^T A_1 \mathbf{u}_h .
\label{A0A1}
\end{equation}
\end{proof}

Notice that (\ref{S2-e-2}) does not imply the spectral equivalence between  $S_{2,e}$ and $M_p^{\circ}$.
Indeed, they are not spectrally equivalent since $S_{2,e}$ becomes singular as $\epsilon \to 0$ while
$M_p^{\circ}$ is independent of $\epsilon$. Based on the discussion in Appendix~\ref{appendix_A}, 
we take $\hat{S}_{2,e} = M_p^{\circ}$ (an upper bound of $S_{2,e}$) and define
\begin{equation}
\label{P2e}
\mathcal{P}_{2,e} = \begin{bmatrix} A_1 & - (B^{\circ})^T \\ 0 & - M_p^{\circ} 
\end{bmatrix} .
\end{equation}
{It is obvious that $\mathcal{P}_{2,e}$ is nonsingular. Moreover,}
from Lemma~\ref{lem:SPP}, the eigenvalues of $\mathcal{P}_{2,e}^{-1} \mathcal{A}_{2,e}$ are given by
the eigenvalues of $\hat{S}_{2,e}^{-1} S_{2,e}$, i.e.,
\begin{equation}
\label{PA2e-eigen}
    \lambda_j = \epsilon + \frac{\mathbf{w}_j^T B^{\circ} A_1^{-1} (B^{\circ})^T\mathbf{w}_j}
    {\mathbf{w}_j^T M_p^{\circ} \mathbf{w}_j}, \quad j = 1, ..., m
\end{equation}
where $m$ denotes the size of $S_{2,e}$ and $\mathbf{w}_j$'s are eigenvectors of $(B^{\circ} A_1^{-1} (B^{\circ})^T,
M_p^{\circ})$. Arranging these eigenvalues in non-decreasing order, we have
\begin{equation}
    0 < \lambda_1 = \epsilon < \lambda_2 \le \cdots \le \lambda_{m} \le {d}+\epsilon,
\end{equation}
where $\lambda_{j}$, $j = 2, ..., m$ stay away from zero as $\epsilon \to 0$ and we have used
the fact that the dimension of $\text{Null} (B^{\circ} A_1^{-1} (B^{\circ})^T)$ is one.
Thus, the eigenvalues of $\mathcal{P}_{2,e}^{-1} \mathcal{A}_{2,e}$ cluster around 1 (formed by
$\lambda_{j}$, $j = 2, ..., m$) plus an outlier $\lambda_1 = \epsilon$.
This indicates that $\mathcal{P}_{2,e}$ and $ \mathcal{A}_{2,e}$ are not spectrally equivalent.
Furthermore,
from
\begin{align*}
& (M_p^{\circ})^{-\frac{1}{2}} B^{\circ} A_1^{-1} (B^{\circ})^T (M_p^{\circ})^{-\frac{1}{2}}
= \left ( (M_p^{\circ})^{-\frac{1}{2}} B^{\circ} A_1^{-\frac{1}{2}}\right )
\left ( (M_p^{\circ})^{-\frac{1}{2}} B^{\circ} A_1^{-\frac{1}{2}} \right )^T,
\end{align*}
we see that non-zero eigenvalues of $(M_p^{\circ})^{-\frac{1}{2}} B^{\circ} A_1^{-1} (B^{\circ})^T (M_p^{\circ})^{-\frac{1}{2}}$ are the squares of the non-zero singular values of $(M_p^{\circ})^{-\frac{1}{2}} B^{\circ} A_1^{-\frac{1}{2}}$.
It is not difficult to show that the minimum non-zero singular value of $(M_p^{\circ})^{-\frac{1}{2}} B^{\circ} A_1^{-\frac{1}{2}}$ is given by
\begin{align}
\label{beta-1}
    \displaystyle
  \beta \equiv \inf_{\mathbf{p}\in \text{Null}  \left ((B^{\circ})^T(M_p^{\circ})^{-\frac{1}{2}}\right )^{\perp}}\sup_{\mathbf{u}\neq 0}
    \frac{\mathbf{p}^T (M_p^{\circ})^{-\frac{1}{2}}B^{\circ}A_1^{-\frac{1}{2}}\mathbf{u}}
      {\sqrt{\mathbf{p}^T \mathbf{p}} \sqrt{\mathbf{u}^T \mathbf{u}}},
\end{align}
which is a matrix-vector form of the well-known inf-sup condition for linear elasticity. It has been proven by Wang et al.  \cite[Lemma 1]{Wang2TavLiu_JCAM_2024} that the WG discretization satisfies the inf-sup condition
with $\beta$ being a constant independent of $h$.
Thus, the non-small eigenvalues of $\mathcal{P}_{2,e}^{-1} \mathcal{A}_{2,e}$ are contained in the interval $\left [\epsilon + \beta^2, \; \epsilon + {d} \right ]$. The radius of this interval is
{$(d-\beta^2)/2$}
which is independent of $\epsilon$ and $h$.

Convergence of Krylov subspace methods for linear systems with eigenvalues in close clusters has been studied
extensively. One analysis particularly relevant to our current situation is by Campbell et al. \cite{Campbell-1996}
which shows that if the eigenvalues of the coefficient matrix consist of a single cluster plus outliers,
then the convergence factor of GMRES is bounded by the cluster radius, while the asymptotic error constant reflects
the non-normality of the coefficient matrix and the distance of the outliers from the cluster. Applying this to our situation, we know
that the convergence factor of GMRES applied to the preconditioned matrix $\mathcal{P}_{2,e}^{-1} \mathcal{A}_{2,e}$
is bounded by { $(d-\beta^2)/2$}, which is
independent of  the mesh size $h$ and the locking parameter $\epsilon$ (or $\lambda$).

We turn our attention back to the computation of preconditioner $\mathcal{P}_{2,e}$ in (\ref{P2e}).
It requires the solution of linear systems
associated with the leading block $A_1$. Unlike (\ref{P2}) and (\ref{P2DLU}) for poroelasticity problems,
the current leading block $A_1$ is essentially the WG approximation of the Laplacian operator
and can be solved efficiently using multigrid, algebraic multigrid, and preconditioned Krylov subspace methods.
In our computation, we use the conjugate gradient (CG) method preconditioned with incomplete Cholesky decomposition
(see the results in Table~\ref{P2e-PCG-A1}).
The performance of $\mathcal{P}_{2,e}$ will be showcased numerically in the next subsection.

\subsection{Numerical experiments for two-field preconditioning}
\label{SEC:2-field-numerics}
We consider two-dimensional linear elasticity and poroelasticity examples with $\Omega = (0,1)\times (0,1)$ and a square uniform mesh with
the element side length $h$. For iterative solution of linear systems, unless stated otherwise, 
we use MATLAB's function {\em gmres} with $tol = 10^{-6}$, $restart = 30$, and the zero vector as the initial guess.
Moreover, all incomplete Cholesky decompositions are performed
using MATLAB's function {\em ichol} with threshold dropping and drop tolerance $10^{-3}$.

\subsubsection{Linear elasticity}
This linear elasticity example, adopted from \cite{Yi_JCAM2019},
{ has the exact solution and right-hand side function as}
\begin{align*}
{ 
    \mathbf{u} = 
    \begin{bmatrix}
        \sin (x) \sin(y) + \frac{x}{\lambda} \\
        \cos (x) \cos(y) + \frac{y}{\lambda}
    \end{bmatrix}, \quad }
\mathbf{f}  = 
  \begin{bmatrix}
        2 \mu \sin(x) \sin(y)
      \\ 
        2 \mu \cos(x) \cos(y) 
\end{bmatrix} .
\end{align*}
The other parameters are set as $E = 1$,
$\lambda$ taking several different values,
and $\mu$ being calculated using $E$ and $\lambda$.
It can be seen that $\nabla \cdot \mathbf{u} = \frac{2}{\lambda}$. This example
is used to simulate a nearly incompressible system.

Table~\ref{P2e-itr} shows the GMRES iterations of the preconditioned system $\mathcal{P}_{2,e}^{-1} \mathcal{A}_{2,e}$
for both locking and non-locking cases. Recall that the eigenvalues of $\mathcal{P}_{2,e}^{-1} \mathcal{A}_{2,e}$
consist of a cluster contained in the interval $\left [\epsilon + \beta^2, \; \epsilon + { d} \right ]$ plus an outlier
$\lambda = \epsilon$ (cf. (\ref{PA2e-eigen})). According to the analysis of \cite{Campbell-1996}, the convergence factor
of GMRES for the preconditioned system is bounded by the radius of the interval,
{$(d-\beta^2)/2$}, which is independent
of $\epsilon$ (or $\lambda$) and mesh size $h$. The number of GMRES iterations required to reach convergence listed in
Table~\ref{P2e-itr} stays consistently bounded by a relatively small number, which is a reflection of the behavior of GMRES
predicted by the analysis and a confirmation of the effectiveness and robustness of the preconditioner
$\mathcal{P}_{2,e}$ about the locking parameter $\lambda$ and mesh size $h$.

The implementation of the preconditioner $\mathcal{P}_{2,e}$ requires the exact inversion of
the leading block $A_1$. Recall that $A_1$ is the WG discretization of the Laplacian operator.
We use the conjugate gradient method with incomplete Cholesky decomposition as the preconditioner to solve
systems associated with $A_1$. The number of required iterations of GMRES with and without preconditioning
is listed in Table~\ref{P2e-PCG-A1}.
We can see that the incomplete Cholesky decomposition preconditioner is effective in reducing the number
of CG iterations required to reach convergence.
On the other hand, the number of iterations grows like $\mathcal{O}(h^{-1})$ for both
preconditioned and unpreconditioned systems as the mesh is refined.
This is consistent with observations made for
incomplete Cholesky decomposition preconditioners for second-order elliptic differential equations
(e.g., see \cite{Greenbaum_book}).

\begin{table}[tbh!]
\centering
\caption{The number of GMRES iterations required to reach convergence for preconditioned systems
for linear elasticity problem (\ref{elasticity-2})
with the preconditioner $\mathcal{P}_{2,e}$ \eqref{P2e} for rectangular meshes.}
\label{P2e-itr}
\begin{tabular}{|c|c|c|c|c|c|c|c|}
\hline
 \backslashbox{$\lambda$}{$1/h$}& \multicolumn{1}{c|}{8} & \multicolumn{1}{c|}{16} & \multicolumn{1}{c|}{32} & \multicolumn{1}{c|}{64} & \multicolumn{1}{c|}{128} & \multicolumn{1}{c|}{256} \\ \hline
1.4286e0 & 20 & 24 & 26 & 27 & 26 & 26 \\ 
1.6667e3 & 20 & 24 & 26 & 27 & 26 & 26 \\ 
1.6667e6 & 13 & 14 & 15 & 15 & 16 & 15 \\ \hline
\end{tabular}
\end{table}

\begin{table}[tbh!]
\centering
\caption{The number of Conjugate Gradient iterations required to reach convergence
for linear systems associated with $A_1$ (for vector Laplacian)
with and without incomplete Cholesky
preconditioning for rectangular meshes.}
\label{P2e-PCG-A1}
\begin{tabular}{|c|c|c|c|c|c|c|}
\hline
1/h & 8 & 16 & 32 & 64 & 128 & 256 \\ \hline
with preconditioning & 5 & 7 & 10 & 18 & 33 & 63 \\ \hline
without preconditioning & 81& 184& 371& 736& 1448&2831 
\\ \hline
\end{tabular}
\end{table}

\subsubsection{Linear poroelasticity}
\label{Section_LinPoro2by2}

This poroelasticity problem is taken from \cite{SISCLeePierMarRog2019} 
with the analytical solutions
\begin{align*}
    \V{u} = t 
    \begin{bmatrix}
        (-1 + \cos(2 \pi x)) \sin(2 \pi y)  + \frac{1}{\lambda + \mu} \sin(\pi x) \sin(\pi y) \\[0.08in]
        \sin(2 \pi x) (1 - \cos(2 \pi y)) + \frac{1}{\lambda + \mu} \sin(\pi x) \sin(\pi y)     
    \end{bmatrix},
    \quad
    p =  -t  \sin(\pi x) \sin(\pi y),
\end{align*}
and 
the right-hand side functions are
\begin{align*}
\V{f} & = 
 - t \begin{bmatrix}
        -8 \pi^2 \mu \cos(2 \pi x) \sin(2 \pi y) - \frac{2 \pi^2 \mu}{\lambda + \mu} \sin(\pi x) \sin(\pi y) \\
      + 4 \pi^2 \mu \sin(2\pi y) 
      + \pi^2 \cos(\pi x+\pi y)
      + \alpha \pi \cos(\pi x)\sin(\pi y) 
      \\[0.08in]
        8 \pi^2 \mu \sin(2 \pi x) \cos(2 \pi y) - \frac{2 \pi^2 \mu}{\lambda + \mu} \sin(\pi x) \sin(\pi y) \\
      - 4 \pi^2 \mu \sin(2\pi x) 
      + \pi^2 \cos(\pi x+\pi y)
      + \alpha \pi \sin(\pi x)\cos(\pi y)
\end{bmatrix},
\\ 
s & = -c_0 \sin(\pi x)\sin(\pi y) 
+ \frac{\pi \alpha}{\lambda + \mu} \sin(\pi x+\pi y)
	  +t \Big(2 \pi^2 \sin(\pi x) \sin(\pi y)\Big) .
\end{align*}
We take parameters $c_0 = 0$ and $1$, $E = 1$, $\kappa = 1$, $\lambda$ with several different values, $\mu$ being calculated using $E$ and $\lambda$, and $t = \Delta t$. 
This example is chosen to mimic a nearly incompressible system as $\lambda \rightarrow \infty$.

In Table \ref{P2_itr}, the number of GMRES iterations required to converge for the preconditioned system is listed for various values of
$h$, $\Delta t$, $c_0$, and $\lambda$.
For the preconditioned system $\mathcal{P}_{2}^{-1} A_{2}$ with $c_0 = 1$, we observe that the number of iterations keeps relatively constant with various $\lambda$ and $h$, especially for the locking cases $\lambda = 1.6667 \times 10^{3} \mbox{ and } 1.6667 \times 10^{6}$ where
GMRES takes only 2 steps to converge.
This confirms that $\mathcal{P}_{2}$ is robust about $\lambda$, $\Delta t$, and $h$.
When $\lambda$ is getting larger, the eigenvalues of the preconditioned system is more clustered around 1, so fewer iterations are needed to converge.
This observation matches the theoretical result that the eigenvalues of $\mathcal{P}_{2}^{-1} A_{2}$ are bounded by
\eqref{2-field-eigen}.
When $c_0 = 0$, the bound of the eigenvalues in \eqref{2-field-eigen} contains a factor ${\epsilon}/{\Delta t}$.
As predicted by this bound, the numerical results show that $\mathcal{P}_{2}$ is effective for all but
the case with $\Delta t \ll \epsilon$ (for instance, $\lambda = 1.4286$ and $c_0 = 0$ in the table).

To improve the computational efficiency of the preconditioner, we replace $D$ in $\mathcal{P}_{2}$ with its incomplete Cholesky decomposition.
As shown in Table \ref{P2_itr}, the number of GMRES iterations using $\mathcal{P}_{2,DLU}$ is almost the same as
that for $\mathcal{P}_{2}$ for $c_0 = 1$, especially for the locking cases.
For $c_0 = 0$, the performance of $\mathcal{P}_{2,DLU}$ is comparable with that of $\mathcal{P}_{2}$.

\begin{table}[tbh!]
\centering
\caption{The number of GMRES iterations required to reach convergence for preconditioned systems 
for linear poroelasticity problem (\ref{2by2Scheme_matrix2}) with
preconditioners $\mathcal{P}_{2}$ \eqref{P2} and $\mathcal{P}_{2,DLU} $ \eqref{P2DLU} for rectangular meshes, with $\kappa = 1$ and $\Delta t = 10^{-3}$ and $10^{-6}$.}
\label{P2_itr}
\begin{tabular}{|c|c|c|c|c|c||c|c|c|c|}
\hline
\multirow{3}{*}{$1/h$} & \multirow{3}{*}{$\lambda$} 
&\multicolumn{4}{c||}{$c_0 = 1$} & \multicolumn{4}{c|}{$c_0 = 0$} \\ \cline{3-10}
\multirow{2}{*}{} & \multirow{2}{*}{}&\multicolumn{2}{c|}{$\mathcal{P}_{2}$} 
& \multicolumn{2}{c||}
{$\mathcal{P}_{2,DLU}$} & \multicolumn{2}{c|}{$\mathcal{P}_{2}$} & \multicolumn{2}{c|}{$\mathcal{P}_{2,DLU}$} \\ \cline{3-10}
\multirow{2}{*}{} & \multirow{2}{*}{}& $10^{-3}$ &  $10^{-6} $ & $10^{-3} $ &  $10^{-6} $& $10^{-3} $ &  $10^{-6} $ & $10^{-3} $ &  $10^{-6}$
\\  \hline
\multirow{3}{*}{8} & 1.4286e0 & 7 & 6   & 7 & 8 &12  & 26& 14& 30\\ 
& 1.6667e3 & 2 & 2  &  2& 2 & 2 & 3 & 2&3\\ 
& 1.6667e6 &2  & 2  &2  & 2 & 2 & 2 & 2&2\\  
\hline \hline
\multirow{3}{*}{16} & 1.4286e0 & 7 & 7   & 7 & 7 &  13& 38& 16&59\\ 
& 1.6667e3 & 2 & 2  & 2 &2  &2  &3  & 2&3\\ 
& 1.6667e6 & 2 & 2  &2  &2  &2  &  2 &2 &2\\  
\hline \hline
\multirow{3}{*}{32} & 1.4286e0 & 7 & 7   & 7 & 7 & 13 & 87& 15& 112\\ 
& 1.6667e3 & 2 & 2 &  2& 2 & 2 & 3  & 2&2\\ 
& 1.6667e6 & 2 & 2  &2 & 2 &2  & 2 &2 &2 \\  
\hline \hline
\multirow{3}{*}{64} & 1.4286e0 & 7 & 7   & 7 & 7 & 13 &87 & 13& 134\\ 
& 1.6667e3 & 2 & 2  & 2 &  2& 2 & 3 &2 & 2\\ 
& 1.6667e6 & 2 & 2   & 2 & 2 & 2 & 2 & 2&2\\  
\hline \hline
\multirow{3}{*}{128} & 1.4286e0 & 7 & 7   & 6 & 7 & 13 &119 &9 &106\\ 
& 1.6667e3 & 2 &  2 & 2 & 2 & 2 & 3 & 2&2\\ 
& 1.6667e6 & 2 & 2  & 2 & 2 & 2 &2 &2 & 2\\ 
\hline \hline
\multirow{3}{*}{256} & 1.4286e0 & 7 &   7 & 9 & 7 & 13 & 126& 16&75 \\ 
& 1.6667e3 &2 & 2& 2 & 2 & 2 & 4 & 2&2\\ 
& 1.6667e6 & 2 &  2 & 2 & 2& 2 & 2 & 2&2\\  \hline 
\end{tabular}
\end{table}

\section{Three-field Schur complement preconditioning}
\label{SEC:3by3}

In this section we develop three-field Schur complement preconditioners for the iterative solution of (\ref{2by2Scheme_matrix2}).

\subsection{Three-field Schur complement preconditioners}
\label{SEC:three-field-schur}

In the previous section, we have seen that the two-field preconditioner (\ref{P2e}) is effective by transforming the linear elasticity problem into a new saddle-point system and using preconditioned GMRES.
A natural question is if we can use (\ref{w-1}) directly to transform (\ref{2by2Scheme_matrix2}) into a three-field
saddle-point system and obtain corresponding three-field preconditioners.

Like in Section~\ref{SEC:linear-elasticity}, we define a new variable $\mathbf{w}_h = - (M_p^{\circ})^{-1} B^{\circ} \mathbf{u}_h$
(cf. (\ref{w-1})). 
With \eqref{A0-2}, the first equation of the system \eqref{2by2Scheme_matrix2} can be rewritten as
\begin{align*}
    (\epsilon A_1 + A_0 )\V{u}_h + \frac{\alpha \epsilon}{\mu} B^T \V{p}_h = \epsilon A_1 \V{u}_h + \frac{\alpha \epsilon}{\mu} B^T \V{p}_h - (B^{\circ})^T \V{w}_h.
\end{align*}
Then,
we can rewrite \eqref{2by2Scheme_matrix2} into
\[
    \begin{bmatrix}
       \epsilon A_1  & \frac{\alpha \epsilon}{\mu} B^T & -(B^{\circ})^T\\[0.1in]
     \frac{\alpha \epsilon}{\mu} B & -\frac{\epsilon}{\mu} D & 0\\[0.1in]
     -B^{\circ} & 0& - M_{p}^{\circ}
    \end{bmatrix}
    \begin{bmatrix}
         \mathbf{u}_h \\[0.1in]
        \mathbf{p}_h \\[0.1in]
        \mathbf{w_h}
    \end{bmatrix}
    =
    \frac{\epsilon}{\mu} \begin{bmatrix}
        \mathbf{b}_1 \\[0.1in]
        \mathbf{b}_2 \\[0.1in]
       0
    \end{bmatrix} .
\]
By rescaling the unknown variables, we can further rewrite the above system into a more balanced form,
\begin{equation}
    \begin{bmatrix}
      A_1  &  B^T & -(B^{\circ})^T\\[0.1in]
     B & \displaystyle -\frac{\mu}{ \alpha^2 }D & 0\\[0.1in]
     -B^{\circ} & 0& \displaystyle -\epsilon M_{p}^{\circ}
    \end{bmatrix}
    \begin{bmatrix}
        \mathbf{u}_h \\[0.1in]
        \frac{\alpha}{\mu}\mathbf{p}_h \\[0.1in]
        \frac{1}{\epsilon} \mathbf{w_h}
    \end{bmatrix}
    =
     \begin{bmatrix}
       \frac{1}{\mu} \mathbf{b}_1 \\[0.1in]
       \frac{1}{\alpha} \mathbf{b}_2 \\[0.1in]
       0
    \end{bmatrix} .
    \label{3by3Scheme}
\end{equation}
In principle, we can work directly with the above system. Unfortunately, as for the two-field formulation (\ref{2by2Scheme_matrix2}),
this can lead to an undesired upper bound proportional to $1/\Delta t$ even for the case $c_0 > 0$
because part of the second block in (\ref{3by3Scheme}) becomes zero as $\Delta t \to 0$.
Once again, this is a distinct feature of the WG discretization. As in
Section~\ref{SEC:2-field-poroe}, we can eliminate unknown variable $\mathbf{p}_h^{\partial}$ from the system and get
\[
    \begin{bmatrix}
      A_1  &  (B^{\circ})^T & -(B^{\circ})^T\\[0.1in]
     B^{\circ} & \displaystyle -\frac{\mu}{ \alpha^2 }\tilde{D} & 0\\[0.1in]
     -B^{\circ} & 0& \displaystyle -\epsilon M_{p}^{\circ}
    \end{bmatrix}
    \begin{bmatrix}
        \mathbf{u}_h \\[0.1in]
        \frac{\alpha}{\mu}\mathbf{p}_h^{\circ} \\[0.1in]
        \frac{1}{\epsilon} \mathbf{w_h}
    \end{bmatrix}
    =
     \begin{bmatrix}
       \frac{1}{\mu} \mathbf{b}_1 \\[0.1in]
       \frac{1}{\alpha} \mathbf{b}_2^{\circ} \\[0.1in]
       0
    \end{bmatrix} ,
\]
where $\tilde{D}$ is defined in (\ref{D-3}). We can further formulate the above system into
\begin{align}
&    \tilde{\mathcal{A}}_{3}
    \begin{bmatrix}
        \mathbf{u}_h \\[0.1in]
        \frac{\alpha}{\mu}\mathbf{p}_h^{\circ} \\[0.1in]
        \frac{1}{\epsilon} \mathbf{w_h} - \frac{\alpha}{\mu}\mathbf{p}_h^{\circ}
    \end{bmatrix}
    =
     \begin{bmatrix}
       \frac{1}{\mu} \mathbf{b}_1 \\[0.1in]
       \frac{1}{\alpha} \mathbf{b}_2^{\circ} \\[0.1in]
       0
    \end{bmatrix} ,
\end{align}
where 
\begin{align}
    \tilde{\mathcal{A}}_{3} = 
\left [
\begin{array}{c|cc}
     A_1  &  0 & -(B^{\circ})^T\\
     \noalign{\vskip 0.1in} \hline \noalign{\vskip 0.1in}
     \displaystyle  0 &-\frac{\mu}{ \alpha^2 }\tilde{D}-\epsilon M_p^{\circ} & -\epsilon M_p^{\circ}\\[0.1in]
     \displaystyle  -B^{\circ} & -\epsilon M_p^{\circ} & -\epsilon M_{p}^{\circ} \\    
\end{array}
\right ]
\equiv
\begin{bmatrix}
    A_1 & \mathcal{B}^T \\[0.1in]
    \mathcal{B} & -\mathcal{D}
\end{bmatrix} .
    \label{3by3Scheme-2}
\end{align}
The Schur complement for this matrix is
\begin{align}
     \tilde{S}_3  = \mathcal{D} + \mathcal{B} A_1^{-1} \mathcal{B}^T =
         \begin{bmatrix}
           \displaystyle  \frac{\mu}{ \alpha^2 }\tilde{D}+\epsilon M_p^{\circ} & \epsilon M_p^{\circ}\\[0.1in]
     \displaystyle  \epsilon M_p^{\circ} & \epsilon M_{p}^{\circ} + B^{\circ} A_1^{-1} (B^{\circ})^T
       \end{bmatrix} .
\label{S3-1}
\end{align}

It is worth commenting that $\frac{1}{\epsilon} \mathbf{w_h} - \frac{\alpha}{\mu}\mathbf{p}_h^{\circ}$
corresponds to the total pressure $p_T = - \lambda \nabla \cdot \mathbf{u} + \alpha p$ 
that has been used by several researchers; e.g., see \cite{Boon4_SISC_2021,Lee-SISC-2017,Ricardo2_SINUM_2016}.

\begin{pro}
\label{pro:3-field}
\begin{align}
   \begin{bmatrix}
       \displaystyle  \frac{\mu}{ \alpha^2 }\tilde{D} & 0 \\[0.1in]
       0 & 0
   \end{bmatrix}
   \le \tilde{S}_3
   \le
  \begin{bmatrix}
   \displaystyle  2 \left (\frac{\mu}{ \alpha^2 }\tilde{D}+ \epsilon M_p^{\circ}\right )
   & 0 \\[0.1in]
   0 & M_p^{\circ} (2 \epsilon + { d})
\end{bmatrix} .
\label{pro:3-field-1}
\end{align}
\end{pro}

\begin{proof}
Consider the quadratic form of $\tilde{S}_3$. Using the Cauchy-Schwarz inequality we have
\begin{align*}
\begin{bmatrix}  \displaystyle
        \mathbf{u}^T & \mathbf{v}^T
    \end{bmatrix} \tilde{S}_3 \begin{bmatrix}
        \mathbf{u} \\  \displaystyle
        \mathbf{v}
    \end{bmatrix}
& =   \mathbf{u}^T \left ( \frac{\mu}{ \alpha^2 }\tilde{D}+\epsilon M_p^{\circ} \right ) \mathbf{u}
    + 2 \epsilon \mathbf{u}^T M_p^{\circ} \mathbf{v}
    + \mathbf{v}^T \left ( \epsilon M_{p}^{\circ} + B^{\circ} A_1^{-1} (B^{\circ})^T \right ) \mathbf{v} 
\\
& \leq \mathbf{u}^T \left ( \frac{\mu}{ \alpha^2 }\tilde{D}+ 2 \epsilon M_p^{\circ} \right ) \mathbf{u}
+ \mathbf{v}^T \left ( 2 \epsilon M_{p}^{\circ} + B^{\circ} A_1^{-1} (B^{\circ})^T \right ) \mathbf{v}
\\
& \le 2 \mathbf{u}^T \left ( \frac{\mu}{ \alpha^2 }\tilde{D}+ \epsilon M_p^{\circ} \right ) \mathbf{u} 
+ \mathbf{v}^T M_{p}^{\circ} \mathbf{v} ( 2 \epsilon + { d} ),
\end{align*}
where we have used (\ref{A0-2}) and (\ref{A0A1}).
Similarly, we have
\begin{align*}
\begin{bmatrix}  \displaystyle
        \mathbf{u}^T & \mathbf{v}^T
    \end{bmatrix} \tilde{S}_3 \begin{bmatrix}
        \mathbf{u} \\  \displaystyle
        \mathbf{v}
    \end{bmatrix}
& = \mathbf{u}^T \frac{\mu}{ \alpha^2 }\tilde{D} \mathbf{u}
    + \mathbf{v}^T B^{\circ} A_1^{-1} (B^{\circ})^T \mathbf{v} 
    + \epsilon (\mathbf{u} + \mathbf{v})^T M_p^{\circ} (\mathbf{u} + \mathbf{v}) 
\\
&
\ge \mathbf{u}^T \left ( \frac{\mu}{ \alpha^2 }\tilde{D} \right ) \mathbf{u}.
\end{align*}
The inequality (\ref{pro:3-field-1}) follows from the above results.
\end{proof}

Notice that $\tilde{S}_{3}$ becomes singular as $\epsilon \to 0$. Based on the discussion in Appendix~\ref{appendix_A},
we choose
\begin{equation}
\label{S3-2}
\hat{\tilde{S}}_3 = \begin{bmatrix}
   \displaystyle  \frac{\mu}{\alpha^2}\tilde{D} + \epsilon M_p^{\circ}
   & 0 \\[0.05in]
   0 & M_p^{\circ}
\end{bmatrix} 
\end{equation}
to take care of non-small eigenvalues of $\tilde{S}_3$. 
Thus, we define the three-field block upper triangular preconditioner as
\begin{align}
\tilde{\mathcal{P}}_{3} = 
    \begin{bmatrix}
          A_1  & 0 & -(B^{\circ})^T\\[0.05in]
        0 & \displaystyle -\frac{\mu}{\alpha^2}\tilde{D}-\epsilon M_p^{\circ} & 0 \\[0.05in]
        0 & 0 & - M_p^{\circ}
    \end{bmatrix} .
    \label{P3-0}
\end{align}
By Lemma~\ref{lem:SPP}, the eigenvalues of $\tilde{\mathcal{P}}_{3}^{-1} \tilde{A}_3$ consist of 1 and those
of $\hat{\tilde{S}}_3^{-1} \tilde{S}_3$.
From (\ref{S3-1}) and (\ref{S3-2})
it is not difficult to see that as $\epsilon \to 0$,
{the eigenvalues of $\hat{\tilde{S}}_3^{-1} \tilde{S}_3$}
are given by $1$ and the
eigenvalues of $(M_p^{\circ})^{-1} B^{\circ} A_1^{-1} (B^{\circ})^T$. As a result, the spectrum
of $\tilde{\mathcal{P}}_{3}^{-1} \tilde{A}_3$ contains
{an outlier of magnitude $\mathcal{O}(\epsilon)$ and
the cluster of eigenvalues in the interval $[\beta^2+\mathcal{O}(\epsilon),d+\mathcal{O}(\epsilon)]$.}
Like linear elasticity problems (cf. Section~\ref{SEC:linear-elasticity}), from the analysis of
Campbell et al. \cite{Campbell-1996} we can expect that 
the convergence factor of GMRES applied to the preconditioned system is bounded by the cluster radius
{$(d-\beta^2)/2$} (which is independent of $\Delta t$ and $h$) and thus, the preconditioner is robust
about the mesh size, time step, and locking parameter at least when $\epsilon$ is small.

As in Section~\ref{SEC:2-field-poroe}, we can transform the system and preconditioner back to the original
unknown variables. We have
\begin{equation}
    \mathcal{A}_3
    \begin{bmatrix}
        \mathbf{u}_h \\[0.05in]
        \frac{\alpha}{\mu}\mathbf{p}_h \\[0.05in]
        \frac{1}{\epsilon} \mathbf{w_h} - \frac{\alpha}{\mu}\mathbf{p}_h^{\circ}
    \end{bmatrix}
    =
     \begin{bmatrix}
       \frac{1}{\mu} \mathbf{b}_1 \\[0.05in]
       \frac{1}{\alpha} \mathbf{b}_2 \\[0.05in]
       0
    \end{bmatrix} ,
    \qquad 
    \mathcal{A}_3 = \begin{bmatrix}
      A_1  &  0 & -(B^{\circ})^T\\[0.05in]
     \displaystyle  0 &-\frac{\mu}{\alpha^2}\tilde{\tilde{D}}
     & -\epsilon \begin{pmatrix} M_p^{\circ}\\ 0 \end{pmatrix} \\[0.05in]
     \displaystyle  -B^{\circ} & -\epsilon \begin{pmatrix} M_p^{\circ} & 0 \end{pmatrix} & -\epsilon M_{p}^{\circ}
    \end{bmatrix} ,
    \label{3by3Scheme-3}
\end{equation}
\begin{align}
\label{P3}
&    \mathcal{P}_3 = \begin{bmatrix}
      A_1  &  0 & -(B^{\circ})^T\\[0.05in]
     \displaystyle  0 &-\frac{\mu}{ \alpha^2 }\tilde{\tilde{D}}
     & 0 \\[0.05in]
     \displaystyle  0 & 0 & - M_{p}^{\circ}
    \end{bmatrix},
\end{align}
where 
\begin{equation}
    \label{D-4}
    \tilde{\tilde{D}} = D+\frac{\alpha^2\epsilon}{\mu} \begin{pmatrix} M_p^{\circ} & 0\\ 0 & 0 \end{pmatrix}
    = \left (c_0 + \frac{\alpha^2\epsilon}{\mu}\right ) \begin{pmatrix} M_p^{\circ} & 0\\ 0 & 0 \end{pmatrix} + {  \kappa}\Delta t A_p .
\end{equation}
Note that the eigenvalues of $\mathcal{P}_3^{-1} \mathcal{A}_3$ consist of $1$ and those of $\tilde{\mathcal{P}}_3^{-1} \tilde{\mathcal{A}}_3$ and thus, as $\epsilon \to 0$, contain the outlier
{near} 0 and the cluster {approximately} in the interval $[\beta^2,{ d}]$.

To make this preconditioner more practical and efficient, we replace the (2,2) block with its incomplete Cholesky decomposition.
Thus, we have
\begin{align}
\mathcal{P}_{3,DLU} = 
    \begin{bmatrix}
      A_1  &  0 & -(B^{\circ})^T\\[0.1in]
     \displaystyle  0 &-\frac{\mu}{ \alpha^2 }L_{\tilde{\tilde{D}}} L_{\tilde{\tilde{D}}}^T
     & 0 \\[0.1in]
     \displaystyle  0 & 0 & - M_{p}^{\circ}
    \end{bmatrix} .
    \label{P3DLU}
\end{align}
Notice that the mass matrix $M_p^{\circ}$ is diagonal and there is no need to replace it by its Cholesky decomposition.

{We also apply the algebraic multigrid method (AMG) \cite{amg} (with two v-cycles) to approximate the 
(2,2) block. The resulting preconditioner is denoted by}
\begin{align}
\mathcal{P}_{3,DAMG} = 
    \begin{bmatrix}
      A_1  &  0 & -(B^{\circ})^T\\[0.1in]
     \displaystyle  0 & -\frac{\mu}{ \alpha^2} \text{AMG}({\tilde{\tilde{D}}})
     & 0 \\[0.1in]
     \displaystyle  0 & 0 & - M_{p}^{\circ}
    \end{bmatrix} .
    \label{P3DAMG}
\end{align}

\subsection{Numerical experiments for three-field preconditioning}

In this section we present numerical results to showcase the performances of
preconditioners $\mathcal{P}_{3}$,
{$\mathcal{P}_{3,DLU}$, and $\mathcal{P}_{3,DAMG}$}.

\subsubsection{Two-dimensional examples}
We test the same two-dimensional example as described in Section \ref{Section_LinPoro2by2}.
We first examine the performance of preconditioner $\mathcal{P}_{3}$ (\ref{P3}).
From the convergence analysis of GMRES in \cite{Campbell-1996}, 
we can expect GMRES to have a rapid, parameter-robust convergence for the preconditioned system.
Indeed, this can be verified by the results in Table~\ref{P3_itr} which show that
the number of GMRES iterations stays consistently bounded for various values of $\lambda$, $\Delta t$, and $h$
and $c_0 = 1$ and $c_0 = 0$,
which validates that $\mathcal{P}_{3}$ is effective and robust about $\lambda$, $\Delta t$, $h$, and $c_0$.
Particularly, $\mathcal{P}_{3}$ works well for $c_0 = 0$ and $\Delta t \ll \epsilon$
(for instance, $\lambda = 1.4286$, $c_0 = 0$ and $\Delta t = 10^{-3}$ and $10^{-6}$ in Table~\ref{P3_itr}),
a case when the two-field preconditioners $\mathcal{P}_2$ and $\mathcal{P}_{2,DLU}$ do not perform well. 

Now we examine the performance of $\mathcal{P}_{3,DLU}$ that is obtained by replacing $\tilde{\tilde{D}}$
with its incomplete Cholesky decomposition in $\mathcal{P}_{3}$.
Its performance is comparable with that of $\mathcal{P}_{3}$
for all but cases with $c_0 = 0$ and $h = 1/128$ and $1/256$; see Table~\ref{P3_itr}.
It seems that the accuracy in approximating $\tilde{\tilde{D}}$ plays a more significant role
in the convergence of GMRES when $c_0 = 0$ and the mesh is very fine.
Still, the number of GMRES iterations with $\mathcal{P}_{3,DLU}$ does not grow like $\mathcal{O}(h^{-1})$
as for second-order elliptic differential equations (see, e.g., \cite{Greenbaum_book} and Table~\ref{P2e-PCG-A1}). 
This may be an interesting research topic to investigate in near future.

At this point, we can claim that $\mathcal{P}_{3,DLU}$ is a good choice as the preconditioner for the iterative
solution of the linear poroelasticity system (\ref{3by3Scheme}). Like $\mathcal{P}_{3}$, it is effective and robust about
mesh size, time step, and locking parameter. Moreover, it requires only the exact inversion of
the leading block $A_1$. Recall that $A_1$ results from the WG discretization of the Laplacian operator. It can be solved
efficiently using multigrid or algebraic multigrid methods or Krylov subspace iterative methods preconditioned
with, for instance, incomplete Cholesky decomposition.
Results obtained with the algebraic multigrid method will be presented in the next example.

\begin{table}[tbh!]
\centering
\caption{The number of GMRES iterations required to reach convergence for preconditioned systems
for linear poroelasticity problem (\ref{3by3Scheme-3}) with
preconditioners $\mathcal{P}_{3}$ \eqref{P3} and $\mathcal{P}_{3,DLU} $ \eqref{P3DLU} for rectangular meshes, with $\kappa = 1$ and $\Delta t = 10^{-3}$ and $10^{-6}$.}
\label{P3_itr}
\begin{tabular}{|c|c|c|c|c|c||c|c|c|c|}
\hline
\multirow{3}{*}{$1/h$} & \multirow{3}{*}{$\lambda$} 
&\multicolumn{4}{c||}{$c_0 = 1$} & \multicolumn{4}{c|}{$c_0 = 0$} \\ \cline{3-10}
\multirow{2}{*}{} & \multirow{2}{*}{}
&\multicolumn{2}{c|}{$\mathcal{P}_{3}$}  
& \multicolumn{2}{c||} {$\mathcal{P}_{3,DLU}$}
& \multicolumn{2}{c|}{$\mathcal{P}_{3}$} 
& \multicolumn{2}{c|}{$\mathcal{P}_{3,DLU}$} \\ \cline{3-10}
\multirow{2}{*}{} & \multirow{2}{*}{}& $10^{-3}$ &  $10^{-6} $ & $10^{-3} $ &  $10^{-6} $&  $10^{-3} $ &  $10^{-6}$& $10^{-3}$ &  $10^{-6} $
\\  \hline
\multirow{3}{*}{8} & 1.4286e0 & 17 &17 & 17 & 18  &19 &26  & 20& 27\\ 
& 1.6667e3 & 19& 19  &  19& 19 & 19& 20&19 & 20\\ 
& 1.6667e6 &14 &14  & 14& 14 & 14& 14& 14&14 \\  
\hline \hline
\multirow{3}{*}{16} & 1.4286e0 &18  & 19&  18& 20  & 21 & 30& 21&31\\ 
& 1.6667e3 &25 & 25  &  25& 25  &25 & 26 & 25 & 26\\ 
& 1.6667e6 & 17 & 17& 17 &17  &17 & 17& 17 & 17 \\   
\hline \hline
\multirow{3}{*}{32} & 1.4286e0 & 19 & 20& 19 &20  & 22 &33 & 22& 34\\ 
& 1.6667e3 & 27&  27 & 27 & 27 & 27& 28& 27& 28\\ 
& 1.6667e6 & 18& 18 &  18 & 18 &18&18 &18 &18 \\    
\hline \hline
\multirow{3}{*}{64} & 1.4286e0 & 18 &  19 & 18 & 19 & 22 &34 &23 &34 \\ 
& 1.6667e3 & 28&  28 & 28 & 28 & 28& 29 & 32 & 30 \\ 
& 1.6667e6 & 18  &18 & 18 & 18 &18 & 18& 21&21\\    
\hline \hline
\multirow{3}{*}{128} & 1.4286e0 & 18 &19 &18  &19  &21 & 32 & 23&34\\ 
& 1.6667e3 & 29&  29 & 29 &29  & 29& 30 & 60& 31\\ 
& 1.6667e6 &18 & 18 & 19& 18 &  18& 18 & 25&  26\\  
\hline \hline
\multirow{3}{*}{256}& 1.4286e0 & 18 & 19&19  & 19  & 21& 31 &29 & 32\\ 
& 1.6667e3 & 29&  29 & 30 & 29 & 29& 31 & 196 & 50 \\ 
& 1.6667e6 & 18& 18 & 18& 18 & 18& 18& 29& 45\\   \hline 
\end{tabular}
\end{table}


%

\subsubsection{Three-dimensional examples}

In this subsection, we present some numerical results to showcase the effectiveness of the preconditioners in three dimensions.

We consider a three-dimensional example, which is a modification of the two-dimensional problem, with $\Omega = (0,1)\times (0,1)\times (0,1)$ and a cubic uniform mesh. 
We implement the preconditioners $\mathcal{P}_{3}$ and {$\mathcal{P}_{3,DAMG}$
on \texttt{deal.II} \cite{dealii}, where AMG is provided by \texttt{PETSc} \cite{petsc}}, and use the GMRES solver
\textit{SolverGMRES} with {relative tolerance $10^{-3}$, measured with respect to the norm of the right-hand side,} and
the default restart 28. 
The right-hand side functions are taken as
\begin{align*}
\mathbf{f} & = t
 \begin{bmatrix}
 4\mu \cos(2\pi x) \sin(2\pi y) \sin(2\pi z) \pi^2 + \frac{(4\mu + \lambda)}{(\mu + \lambda)} \sin(\pi x) \sin(\pi y) \sin(\pi z) \pi^2 \\
- \cos(\pi x) \cos(\pi y) \sin(\pi z) \pi^2 - \cos(\pi x) \sin(\pi y) \cos(\pi z) \pi^2 \\
+ 8\pi^2 \mu \left(-1 + \cos(2\pi x)\right) \sin(2\pi y) \sin(2\pi z) + \alpha \pi \cos(\pi x) \sin(\pi y) \sin(\pi z)
\\[0.08in]
- \pi^2 \cos(\pi x) \cos(\pi y) \sin(\pi z) + \frac{(4\mu + \lambda)}{(\mu + \lambda)} \sin(\pi x) \sin(\pi y) \sin(\pi z) \pi^2 \\
- \sin(\pi x) \cos(\pi y) \cos(\pi z) \pi^2 + 16\pi^2 \mu \sin(2\pi x) \left(1 - \cos(2\pi y)\right) \sin(2\pi z) \\
- 8\pi^2 \mu \sin(2\pi x) \cos(2\pi y) \sin(2\pi z) + \alpha \pi \sin(\pi x) \cos(\pi y) \sin(\pi z)
\\[0.08in]
 \frac{(4\mu + \lambda)}{(\mu + \lambda)} \sin(\pi x) \sin(\pi y) \sin(\pi z) \pi^2 + 4\pi^2 \mu \sin(2\pi x) \sin(2\pi y) \cos(2\pi z) \\
- \cos(\pi x) \sin(\pi y) \cos(\pi z) \pi^2 - \sin(\pi x) \cos(\pi y) \cos(\pi z) \pi^2 \\
+ 8\pi^2 \mu \left(-1 + \cos(2\pi z)\right) \sin(2\pi x) \sin(2\pi y) + \alpha \pi \sin(\pi x) \sin(\pi y) \cos(\pi z)
\end{bmatrix},
\\
s &= \frac{\alpha \pi}{\mu + \lambda} \Big( \cos(\pi x) \sin(\pi y) \sin(\pi z) + \sin(\pi x) \cos(\pi y) \sin(\pi z)
\\
& \qquad \qquad + \sin(\pi x) \sin(\pi y) \cos(\pi z) \Big)
 + (3 \pi^2 t + c_0)\sin(\pi x) \sin(\pi y) \sin(\pi z) .
\end{align*}
The other parameters are chosen the same as for two-dimensional examples.
The inversion of the (1,1) and (2,2) blocks in $\mathcal{P}_3$ and the inversion of the (1,1) block
in {$\mathcal{P}_{3,DAMG}$} are carried out through solving corresponding linear systems
using the conjugate gradient method preconditioned with {one v-cycle of AMG}.
{Moreover, two v-cycles of AMG 
are used to approximate $\tilde{\tilde{D}}$ in the (2,2) block for $\mathcal{P}_{3,DAMG}$.}

Table~\ref{P3_itr_3D} shows the number of GMRES iterations with the two preconditioners
for locking and non-locking cases, $\Delta t = 10^{-3}$ and
$10^{-6}$, and $c_0 = 1$ and $c_0 = 0$ on cubic meshes. We can see that the performance of
{$\mathcal{P}_{3,DAMG}$ is almost the same as that of $\mathcal{P}_3$ for all cases}. Moreover, they both work comparably well for $c_0 = 1$ and $c_0 = 0$.
Overall, the number of iterations stays relatively small and constant for the variations of the parameters,
which validates the effectiveness and robustness of the preconditioner
about the locking parameter, mesh size, and time step.

\begin{table}[h!]
\tiny
\centering
\caption{The number of GMRES iterations required to reach convergence for preconditioned systems
for linear poroelasticity problem (\ref{3by3Scheme-3}) with
preconditioners $\mathcal{P}_{3}$ \eqref{P3} and $\mathcal{P}_{3,DAMG} $ \eqref{P3DAMG} for cubic meshes, with $\kappa = 1$ and $\Delta t = 10^{-3}$ and $10^{-6}$.}
\label{P3_itr_3D}
\resizebox{\textwidth}{!}{
\begin{tabular}{|c|c|c|c|c|c||c|c|c|c|}
\hline
\multirow{3}{*}{$1/h$} & \multirow{3}{*}{$\lambda$} 
&\multicolumn{4}{c||}{$c_0 = 1$} & \multicolumn{4}{c|}{$c_0 = 0$} \\ \cline{3-10}
\multirow{2}{*}{} & \multirow{2}{*}{}
&\multicolumn{2}{c|}{$\mathcal{P}_{3}$}  
& \multicolumn{2}{c||} {$\mathcal{P}_{3,DAMG}$}
& \multicolumn{2}{c|}{$\mathcal{P}_{3}$} 
& \multicolumn{2}{c|}{$\mathcal{P}_{3,DAMG}$} \\ \cline{3-10}
\multirow{2}{*}{} & \multirow{2}{*}{}& $10^{-3}$ &  $10^{-6} $ & $10^{-3} $ &  $10^{-6} $&  $10^{-3} $ &  $10^{-6}$& $10^{-3}$ &  $10^{-6} $
\\  \hline
\multirow{3}{*}{8} 
& 1.4286e0 & 16 & 16  & 16& 16 & 21 & 19& 21& 19\\ 
& 1.6667e3 & 15 & 15 & 15 & 15&15 & 16& 15& 16 \\ 
& 1.6667e6 & 14 & 14  &14 & 14 & 14& 14& 14&14 \\  \hline \hline
\multirow{3}{*}{16}
& 1.4286e0 & 18 &  20   & 18& 20 & 25 & 47&26 &47 \\ 
& 1.6667e3 & 48 &   48   &  48& 48& 48& 48&48 &  48\\ 
& 1.6667e6 & 19 & 19 &19 & 19 & 19&19 & 19& 19\\  \hline  \hline
\multirow{3}{*}{32} 
& 1.4286e0 & 22 &   23  & 22&23  & 28 & 54&28 &52 \\ 
& 1.6667e3 & 52 &    52  & 52 & 52&52 &55 & 52&  55\\ 
& 1.6667e6 & 25 &   25  & 25& 25 & 25&25 &25& 25\\  \hline \hline 
\multirow{3}{*}{64} 
& 1.4286e0 & 24 &  27   &25 & 27 & 30 & 62& 32& 62\\ 
& 1.6667e3 & 57 & 57 & 57 & 57& 57&60 &57 & 59 \\ 
& 1.6667e6 & 29 &   29  &29 & 29 &29 & 29&29 &29 \\  \hline  
\end{tabular}
}
\end{table}

\section{Conclusions}
\label{SEC:conclusions}

In the previous sections, we have studied Schur complement preconditioning for efficient iterative solution of the linear algebraic system resulting from the implicit Euler-weak Galerkin discretization of linear poroelasticity problems. The system is a saddle-point matrix of two-by-two blocks (\ref{2by2Scheme_matrix2}) with the distinct features that the (2,1) and (1,2) blocks are rank deficient and the leading block is becoming singular for the locking situation with large $\lambda$ or small $\epsilon = \mu/(\lambda + \mu)$. These features make it non-trivial to develop efficient preconditioners using the current theory of preconditioning for general saddle-point problems. Instead of following the common practice to develop spectrally equivalent preconditioners, we have sought in this work preconditioners that can take care of non-small eigenvalues of the Schur complement while keeping small ones. This strategy has allowed us to explore and use upper bounds of the Schur complement and develop efficient parameter-robust preconditioners for linear poroelasticity and elasticity problems. 

More specifically, we have studied two-field and three-field block triangular preconditioners based on Schur complement preconditioning. The two-field preconditioners are given by $\mathcal{P}_{2}$ (\ref{P2}) and  $\mathcal{P}_{2,DLU}$ (\ref{P2DLU}). $\mathcal{P}_{2}$ is spectrally equivalent to the Schur complement and the eigenvalues of the corresponding preconditioned matrix are bounded by (\ref{2-field-eigen}). $\mathcal{P}_{2,DLU}$ is obtained by replacing the (2,2) block of $\mathcal{P}_{2}$ with its incomplete Cholesky decomposition. Both preconditioners are effective  for all ranges of the parameters except for the case where $c_0 = 0$ and ${ \kappa}\Delta t \ll \epsilon$. The implementation of these two-field preconditioners requires the solution of linear systems associated with the leading block
$A_0 + \epsilon A_1$, which are actually problems corresponding to linear elasticity. We have showed in Section~\ref{SEC:linear-elasticity} that a linear elasticity problem can be transformed into a saddle-point problem (with the coefficient matrix $\mathcal{A}_{2,e}$) that can be solved efficiently with the preconditioner $P_{2,e}$ (\ref{P2e}). Indeed, the eigenvalues of $P_{2,e}^{-1} \mathcal{A}_{2,e}$ are given in (\ref{PA2e-eigen}) and consist of $\epsilon$ and a cluster contained in the interval $[\epsilon+\beta^2, \epsilon + { d}]$, where $\beta$ (\ref{beta-1}) is a constant associated with the inf-sup condition for the weak Galerkin discretization of linear elasticity. It is known that GMRES can be used to solve this type of systems efficiently \cite{Campbell-1996}.

The two-field system (\ref{2by2Scheme_matrix2}) can be formulated into a three-field system (\ref{3by3Scheme-2}) with the coefficient matrix $\mathcal{A}_3$. The three-field upper triangular preconditioners are given by $\mathcal{P}_{3}$ (\ref{P3}), {$\mathcal{P}_{3,DLU}$  (\ref{P3DLU}), and $\mathcal{P}_{3,DAMG}$  (\ref{P3DAMG})}.
For the locking situation $\epsilon \to 0$, the eigenvalues of $\mathcal{P}_{3}^{-1} \mathcal{A}_3$ consist of {near} zero and a cluster contained in the interval $[\beta^2,{d}]$. Numerical results have confirmed that both $\mathcal{P}_{3}$,
{$\mathcal{P}_{3,DLU}$, and $\mathcal{P}_{3,DAMG}$} are effective and parameter-robust for the GMRES solution of (\ref{3by3Scheme-2}).

It is emphasized that all of the preconditioners considered in this work, $\mathcal{P}_{2}$, $\mathcal{P}_{2,DLU}$, $\mathcal{P}_{2,e}$, $\mathcal{P}_{3}$, {  $\mathcal{P}_{3,DLU}$, $\mathcal{P}_{3,DAMG}$}, do not require the computation of the Schur complement. 
Moreover, their implementation is straightforward. Particularly, the (2,2) and (3,3) blocks are either diagonal or can be replaced by an incomplete Cholesky decomposition {or algebraic multigrid} without significantly affecting the performance, as shown in the numerical experiments. Furthermore, they are robust about the locking parameter, mesh size, and time step and work for both $c_0 > 0$ and $c_0 = 0$.

We comment that the preconditioners considered in this work require the inversion of the leading block. 
For the linear elasticity preconditioner $\mathcal{P}_{2,e}$ and the poroelasticity preconditioners $P_{3}$,
{$P_{3,DLU}$, and $P_{3,DAMG}$},
the leading block is $A_1$, a discrete approximation of the Laplacian operator. The inversion of $A_1$ can be
done efficiently through solving related linear systems using iterative methods such as the multigrid, algebraic multigrid,
and Krylov subspace methods.
In our computation, we have used the conjugate gradient method preconditioned with {either} incomplete Cholesky decomposition {or AMG} for this purpose.
On the other hand, it is common to replace the leading block with approximations such as incomplete LU decompositions for further
improvements in efficiency; e.g., see \cite{BenziGolubLiesen-2005,Benzi2008}. We have done some primary numerical
investigations with incomplete Cholesky decomposition and found that it can significantly affect the performance of the preconditioners
especially for fine meshes. Finding more accurate replacements of the leading block for the preconditioners may deserve more investigations.

{Finally, we emphasize that while we have used the WG discretization for linear elasticity
and poroelasticity problems in this work, the approach we used to construct and analyze
block Shur complement preconditioners can also be applied to other stable and
locking-free discretization methods.}

\section*{Acknowledgments}
W.~Huang was supported in part by the Air Force Office of Scientific Research (AFOSR) grant FA9550-23-1-0571
and the Simons Foundation grant MPS-TSM-00002397.
{The authors are grateful to the anonymous referees for their valuable comments
and Wolfgang Bangerth for his help with \texttt{deal.II}.}


\appendix

\section{Schur complement preconditioning for general saddle-point problems}
\label{appendix_A}

We give a brief discussion on block upper triangular Schur complement preconditioning
and the eigenvalues of the preconditioned matrix for general saddle-point systems in this appendix.
The reader is referred to, e.g., \cite{BenziGolubLiesen-2005,Benzi2008} for complete discussion
of iterative solution and preconditioning for saddle-point systems.

We consider saddle-point problems in the general form
\begin{equation}
\label{SPP-1}
\mathcal{A} = \begin{bmatrix} A & B^T \\ C & - D\end{bmatrix} ,
\end{equation}
where $A$ is nonsingular, $B$ does not necessarily have full rank, and
the Schur complement $S = D + C A^{-1} B^T$ can be singular.
We consider the block triangular preconditioner
\begin{equation}
\label{SPP-2}
\mathcal{P}_{t} = \begin{bmatrix} A & B^T \\ 0 & - \hat{S} \end{bmatrix},
\end{equation}
where $\hat{S}$ is a nonsingular matrix whose choice will be discussed below.
The following lemma provides information about the distribution of the eigenvalues of
the preconditioned matrix $\mathcal{P}_{t}^{-1} \mathcal{A}$.
It is worth mentioning that such information is typically given in literature for ideal preconditioners
defined using the exact Schur complement; e.g., see \cite{MurphyGolubWathen_SISC_2000}. Here we consider approximations
of the Schur complement in (\ref{SPP-2}). One advantage of doing so is that the obtained information
about the distribution of the eigenvalues of the preconditioned matrix can be used
to choose $\hat{S}$.

\begin{lem}
\label{lem:SPP}
The eigenvalues of the preconditioned matrix, $\mathcal{P}_{t}^{-1} \mathcal{A}$, consist of $1$
and the eigenvalues of $\hat{S}^{-1} S$.
Moreover, $\mathcal{P}_{t}^{-1} \mathcal{A}$ satisfies $(\lambda-1)^2 p_{\hat{S}^{-1} S}^2(\lambda) = 0$,
where $p_{\hat{S}^{-1} S}(\lambda)$ denotes the minimal polynomial of $\hat{S}^{-1} S$.
Furthermore, for the case with $C = 0$ or $p_{\hat{S}^{-1} S}(\lambda) = \lambda - 1$,
$\mathcal{P}_{t}^{-1} \mathcal{A}$ satisfies $(\lambda-1) p_{\hat{S}^{-1} S}(\lambda) = 0$.
\end{lem}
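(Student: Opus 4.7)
The plan is to avoid block determinants and reduce the problem to the small $m\times m$ Schur-complement side through a rank factorization of $M-I$, where $M:=\mathcal{P}_t^{-1}\mathcal{A}$. First I would write $\mathcal{P}_t^{-1}$ in closed form by block back-substitution and exploit the fact that $\mathcal{A}-\mathcal{P}_t$ has zero top block row. This forces $M-I = \mathcal{P}_t^{-1}(\mathcal{A}-\mathcal{P}_t)$ to factor as
\[
M - I = U V, \qquad
U = \begin{bmatrix} -A^{-1}B^T \\ I \end{bmatrix}, \qquad
V = \begin{bmatrix} -\hat{S}^{-1}C & \hat{S}^{-1}D - I \end{bmatrix},
\]
with $U$ of size $(n+m)\times m$ and $V$ of size $m\times (n+m)$. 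A direct multiplication yields the central identity
\[
V U = \hat{S}^{-1} C A^{-1} B^T + \hat{S}^{-1} D - I = \hat{S}^{-1} S - I =: N - I.
\]

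For the spectrum claim I would invoke the standard fact that $UV$ and $VU$ share the same nonzero eigenvalues with the same algebraic multiplicities, padded by zeros on the larger side. Hence the spectrum of $M-I$ is the spectrum of $N-I$ together with $n$ extra zeros; shifting by $+1$ shows that the spectrum of $M$ is the spectrum of $\hat{S}^{-1}S$ together with $n$ copies of $1$, proving the first assertion.

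For the polynomial identities, the key step is the intertwining pair
\[
M U = (I + UV) U = U(I + VU) = U N, \qquad V M = V(I + UV) = (I + VU) V = N V,
\]
which extend by induction and linearity to $r(M) U = U\, r(N)$ and $V\, r(M) = r(N)\, V$ for every polynomial $r$. Applied to $r = p_{\hat{S}^{-1} S}$, the minimal polynomial of $N$, this gives both $V\, p_N(M) = 0$ and $p_N(M)\, U = 0$; hence $(M-I)\, p_N(M) = U\bigl(V\, p_N(M)\bigr) = 0$, which trivially implies $(M-I)^2 p_N(M)^2 = 0$. The two special cases follow immediately: when $p_N(\lambda) = \lambda - 1$, one has $N = I$ and $VU = 0$, so already $(M-I)^2 = U(VU)V = 0$; when $C = 0$, the general identity $(M-I)\, p_N(M) = 0$ is itself the sharper bound. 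The main obstacle I expect is resisting a brute-force block-determinant calculation; once the rank-$m$ factorization $M - I = UV$ and the identity $VU = N - I$ are in hand, the rest is routine polynomial algebra in $N$.
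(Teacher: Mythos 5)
Your proposal is correct, and it takes a genuinely different route from the paper. The paper's proof conjugates $M=\mathcal{P}_t^{-1}\mathcal{A}$, via the block LU factorization of $\mathcal{A}$ and the explicit form of $\mathcal{P}_t^{-1}$, to the block lower triangular matrix $\mathcal{L}=\bigl[\begin{smallmatrix} I & 0\\ -\hat S^{-1}C & \hat S^{-1}S\end{smallmatrix}\bigr]$, reads the eigenvalues off the diagonal blocks, and then verifies annihilating polynomials by direct block multiplication on $\mathcal{L}$; because the off-diagonal block of $(\mathcal{L}-I)p_{\hat S^{-1}S}(\mathcal{L})$ need not vanish, that computation only yields $(\lambda-1)^2p_{\hat S^{-1}S}^2(\lambda)$ in general, with the degree-reduced polynomial recovered only under the extra hypotheses $C=0$ or $p_{\hat S^{-1}S}(\lambda)=\lambda-1$. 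You instead exploit the rank-$m$ structure of $\mathcal{A}-\mathcal{P}_t$: writing $M-I=UV$ with $VU=\hat S^{-1}S-I$, the eigenvalue claim follows from the standard fact that $UV$ and $VU$ share nonzero spectrum with multiplicities (your factorization and the identity $VU=\hat S^{-1}S-I$ check out against the explicit block form of $\mathcal{P}_t^{-1}(\mathcal{A}-\mathcal{P}_t)$), and the intertwining relations $MU=U\hat S^{-1}S$, $VM=\hat S^{-1}S\,V$ give $(M-I)\,p_{\hat S^{-1}S}(M)=U\bigl(V\,p_{\hat S^{-1}S}(M)\bigr)=U\,p_{\hat S^{-1}S}(\hat S^{-1}S)\,V=0$ \emph{unconditionally}. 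So your argument actually proves a sharper statement than the lemma: the polynomial $(\lambda-1)p_{\hat S^{-1}S}(\lambda)$ annihilates $\mathcal{P}_t^{-1}\mathcal{A}$ for arbitrary $C$ and arbitrary $\hat S$, which subsumes both the general claim (trivially, since $(M-I)$ and $p_{\hat S^{-1}S}(M)$ commute) and the two special cases listed in the ``furthermore'' part; one can confirm on small examples with $C\neq 0$ and a nontrivial minimal polynomial that the sharper identity indeed holds. What each approach buys: the paper's similarity to $\mathcal{L}$ is concrete and self-contained, while your low-rank/intertwining argument is shorter, avoids the explicit similarity transform, and tightens the minimal-polynomial degree bound, which is directly relevant to the GMRES convergence discussion based on Campbell et al. that the paper builds on this lemma.
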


\begin{proof}
The matrix $\mathcal{A}$ can be factorized as
\[
\mathcal{A} = \begin{bmatrix} I & 0 \\ C A^{-1} & I \end{bmatrix} 
\begin{bmatrix} A & 0 \\ 0 & -S \end{bmatrix}
\begin{bmatrix} I & A^{-1} B^T \\ 0 & I \end{bmatrix}
= \begin{bmatrix} A & 0 \\ C & -S \end{bmatrix} \begin{bmatrix} I & A^{-1} B^T \\ 0 & I \end{bmatrix} .
\]
Moreover, $\mathcal{P}_{t}^{-1}$ can be expressed as
\[
\mathcal{P}_{t}^{-1} = \begin{bmatrix} A^{-1} & A^{-1} B^T \hat{S}^{-1} \\ 0 & - \hat{S}^{-1} \end{bmatrix} .
\]
Using these, it is straightforward to verify that
\[
\mathcal{P}_{t}^{-1} \mathcal{A} = \begin{bmatrix} I & A^{-1} B^T \\ 0 & I \end{bmatrix}^{-1}
\begin{bmatrix} I & 0 \\ - \hat{S}^{-1} C  &  \hat{S}^{-1} S \end{bmatrix}
\begin{bmatrix} I & A^{-1} B^T \\ 0 & I \end{bmatrix} .
\]
Thus, $\mathcal{P}_{t}^{-1} \mathcal{A}$ is similar to the block lower triangular matrix 
\[
\mathcal{L} = \begin{bmatrix} I & 0 \\ - \hat{S}^{-1} C  &  \hat{S}^{-1} S \end{bmatrix}
\]
and its eigenvalues consist of $1$ and those of $\hat{S}^{-1} S$.

Since $\mathcal{P}_{t}^{-1} \mathcal{A}$ is similar to $\mathcal{L}$, we just need to show that $\mathcal{L}$ satisfies
the polynomials in question.
Notice that for any polynomial $q(\lambda)$, $q(\mathcal{L})$ is a block lower triangular matrix with diagonal blocks
$q(I)$ and $q(\hat{S}^{-1} S)$. Then, we have
\[
(\mathcal{L}-I) p_{\hat{S}^{-1} S}(\mathcal{L})
= \begin{bmatrix} 0 & 0 \\ * & \hat{S}^{-1} S - I \end{bmatrix} \; \begin{bmatrix} p_{\hat{S}^{-1} S}(I) & 0 \\ * & 0 \end{bmatrix}
= \begin{bmatrix} 0 & 0 \\ * & 0 \end{bmatrix} ,
\]
where $*$ stands for non-zero blocks.
Thus, $\left ( (\mathcal{L}-I) p_{\hat{S}^{-1} S}(\mathcal{L})\right )^2 = 0$.
That is, $\mathcal{L}$, and therefore, $\mathcal{P}_{t}^{-1} \mathcal{A}$, satisfy
$(\lambda-1)^2 p_{\hat{S}^{-1} S}^2(\lambda) = 0$.

For the case with $C = 0$, $\mathcal{L}$ is block diagonal. Then, 
\[
(\mathcal{L}-I) p_{\hat{S}^{-1} S}(\mathcal{L})
= \begin{bmatrix} 0 & 0 \\ 0 & \hat{S}^{-1} S - I \end{bmatrix} \; \begin{bmatrix} p_{\hat{S}^{-1} S}(I) & 0 \\ 0 & 0 \end{bmatrix}
= \begin{bmatrix} 0 & 0 \\ 0 & 0 \end{bmatrix} .
\]

Finally, for the case with $p_{\hat{S}^{-1} S}(\lambda) = \lambda - 1$, we have $\hat{S}^{-1} S = I$ and
\[
(\mathcal{L}-I) p_{\hat{S}^{-1} S}(\mathcal{L})
= \begin{bmatrix} 0 & 0 \\ * & 0 \end{bmatrix} \; \begin{bmatrix} 0 & 0 \\ * & 0 \end{bmatrix}
= \begin{bmatrix} 0 & 0 \\ 0 & 0 \end{bmatrix} .
\]
\end{proof}

The above lemma provides a useful guide on how to choose $\hat{S}$. For example, we can choose $\hat{S} = +S$
or $-S$ if $S$ is nonsingular.
The preconditioned matrix $\mathcal{P}_{t}^{-1} \mathcal{A}$ has a single eigenvalue $1$ for the former choice
and eigenvalues $\pm 1$ for the latter. Moreover, $\mathcal{P}_{t}^{-1} \mathcal{A}$ satisfies $(\lambda - 1)^2 = 0$
and $(\lambda - 1)^2 (\lambda + 1)^2 = 0$, respectively. These results are well known in the literature for the block triangular
preconditioner (\ref{SPP-2}); e.g., see \cite{MurphyGolubWathen_SISC_2000}.
Unfortunately, these choices of $\hat{S}$ are impractical in general since the matrix factor $C A^{-1} B^T$ involved in $S$
and linear systems associated with $S$ can be expensive to compute.
It is common to choose $\hat{S}$ to be a matrix spectrally equivalent to $S$; e.g.,
see \cite{BenziGolubLiesen-2005,Benzi2008}.
For this choice, the eigenvalues of $\mathcal{P}_{t}^{-1} \mathcal{A}$ lie in a finite interval around $1$
on the real axis of the complex plane and the preconditioned system can be solved efficiently by an iterative method.
However, constructing a replacement spectrally equivalent to $S$ is a difficult task in general.
This is especially so when $S$ is singular or almost singular.

In this work we consider a different strategy with which we do not seek to remove the singularity.
To explain this, we consider the choice
\begin{equation}
\label{SPP-3}
\hat{S} = (S^{+})^{+},
\end{equation}
where $S^{+}$ denotes the Moore-Penrose pseudo-inverse of $S$.
In terms of singular value decompositions, this can be expressed as 
\begin{align*}
& S = U \begin{bmatrix} \mu_1 & & & & & & 0 & 0\\ & \ddots & & & & & \vdots & \vdots \\  & & \mu_r & & & & 0 & 0\\
 & & & 0 & & & 0 & 0 \\ & & & & \ddots & & \vdots & \vdots \\ & & & & & 0 & 0 & 0\end{bmatrix} V^T,
\\
& \hat{S} = (S^{+})^{+} = U \begin{bmatrix} \mu_1 & & & & & & 0 & 0\\ & \ddots & & & & & \vdots & \vdots \\  & & \mu_r & & & & 0 & 0\\
 & & & 1 & & & 0 & 0 \\ & & & & \ddots & & \vdots & \vdots \\ & & & & & 1 & 0 & 0\end{bmatrix} V^T ,
\end{align*}
where $U$ and $V$ are orthogonal matrices. From this, it is not difficult to see that
the eigenvalues of $\hat{S}^{-1} S$, and therefore the eigenvalues of $\mathcal{P}_{t}^{-1} \mathcal{A}$,
consist of $1$ and $0$ when $S$ is singular. This implies that the preconditioner (\ref{SPP-3})
processes non-zero eigenvalues well while keeping the zero eigenvalues.
The resulting preconditioned system is rather ill-conditioned.
It is known that iterative methods such as GMRES can work well for linear systems of this type
(e.g., see \cite{BenziGolubLiesen-2005,Elman-2014}).
Note that (\ref{SPP-3}) is also impractical since the computation of the pseudo-inverse of a matrix of large size
can be costly if not prohibitive.
Nevertheless, the choice suggests that we can seek a replacement
of $S$ that can take care of ``non-small" eigenvalues of $\hat{S}^{-1} S$.
Motivated by this, we will explore bounds of $S$
and show that an upper bound of $S$ can often work for linear elasticity and poroelasticity problems.

For the symmetric case where $C=B$, $A$ is SPD, and $D$ is symmetric and positive semi-definite,
it is beneficial to choose $\hat{S}$ to be SPD. In this case, $S$ is symmetric and positive semi-definite.
The eigenvalues of $\hat{S}^{-1} S$, and therefore, those of $\mathcal{P}_{t}^{-1} \mathcal{A}$,
are real and non-negative.


\bibliographystyle{siamplain}

\end{document}